\documentclass [a4paper,11pt]{amsart}
\usepackage{graphicx}
\usepackage[all]{xy}

\usepackage{amssymb,amscd,amsthm,amsopn,amsmath}
\usepackage{mathrsfs}

\def\C{\mathbb C}

\def\N{\mathbb N}
\def\dim{\operatorname{dim}}
\def\gdc{\operatorname{gdc}}

\def\codim{\operatorname{codim}}

\usepackage{graphicx}
\usepackage[ansinew]{inputenc}    
\usepackage[all]{xy}
\usepackage{amssymb,amscd}
\usepackage{color}
\usepackage{mathrsfs}

\newtheorem{thm}{Theorem}[section]
\newtheorem{cor}[thm]{Corollary}
\newtheorem{teo}[thm]{Theorem}
\newtheorem{lem}[thm]{Lemma}
\newtheorem{prop}[thm]{Proposition}

\theoremstyle{definition}
\newtheorem{exam}[thm]{Example}
\newtheorem{defi}[thm]{Definition}

\hyphenation{sin-gu-la-ri-da-de}
\hyphenation{ge-ne-ra-li-za-da}

\def\A{\mathscr A}
\def\C{\mathbb C}

\def\N{\mathbb N}

\def\O{\mathcal O}

\def\dim{\operatorname{dim}}

\def\codim{\operatorname{codim}}

\def\Derlog{\operatorname{Derlog}}

\begin{document}
\title[Image Milnor number and $\A_e$-codimension]{Image Milnor number and $\A_e$-codimension for maps between weighted homogeneous irreducible curves}

\author{D. A. H. Ament, J. J. Nu\~no-Ballesteros, J. N. Tomazella}

\address{Departamento de Matem\'atica, Universidade Federal de S\~ao Carlos, Caixa Postal 676,
13560-905, S\~ao Carlos, SP, BRAZIL}

\email{daianehenrique@dm.ufscar.br}

\address{Departament de Matem\`atiques,
Universitat de Val\`encia, Campus de Burjassot, 46100 Burjassot
SPAIN}

\email{Juan.Nuno@uv.es}

\address{Departamento de Matem\'atica, Universidade Federal de S\~ao Carlos, Caixa Postal 676,
13560-905, S\~ao Carlos, SP, BRAZIL}

\email{tomazella@dm.ufscar.br}

\thanks{The first author has been supported by CAPES. The second author has been partially supported by DGICYT Grant MTM2015--64013--P. The third author is partially supported by CNPq Grant 309626/2014-5 and FAPESP Grant 2016/04740-7.}

\subjclass[2000]{Primary 32S30; Secondary 58K60, 32S05} \keywords{$\A_e$-codimension, image Milnor number, curve singularities}

\begin{abstract} Let $(X,0)\subset (\C^n,0)$ be an irreducible weighted homogeneous singularity curve and let $f:(X,0)\to(\C^2,0)$ be a map germ finite, one-to-one and weighted homogeneous with the same weights of $(X,0)$. We show that $\A_e$-$\codim(X,f)=\mu_I(f)$, where $\A_e$-$\codim(X,f)$ is the $\A_e$-codimension, i.e., the minimum number of parameters in a versal deformation and $\mu_I(f)$ is the image Milnor number, i.e., the number of vanishing cycles in the image of a stabilisation of $f$. \end{abstract}

\maketitle

\section{Introduction}

Let $\alpha:(\C,S) \to(\C^{2},0)$ be a map germ of finite $\A_e$-codimension. D. Mond shows in \cite{M2} that the image Milnor number of $\alpha$, $\mu_I(\alpha)$, is determined by the number of branches $r(X,0)$ of the curve and the number $\delta(X,0)$ of nodes appearing in a stable perturbation of $\alpha$:
$$
\mu_I(\alpha)=\delta(X,0)-r(X,0)+1.
$$

D. Mond also proves in \cite{M2} a relation between the image Milnor number and the $\A_{e}$-codimension.

\begin{teo}[\cite{M2}]\label{Mond2}
Let $\alpha: (\C,S) \to (\C^{2},0)$ be a map germ of finite $\A_{e}$-codimension. Then, 
$$\A_{e}\mbox{-}\codim(\alpha) \leq \mu_{I}(\alpha),
$$
with equality if $\alpha$ is weighted homogeneous.
\end{teo}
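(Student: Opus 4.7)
The plan is to identify both $\A_e$-$\codim(\alpha)$ and $\mu_I(\alpha)$ with lengths of explicit modules attached to the parametrization, prove the inequality by a module inclusion, and then upgrade to equality under weighted homogeneity via a Poincar\'e series argument anchored in the Euler vector fields.

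First, I would write $\A_e$-$\codim(\alpha) = \dim_\C N\A(\alpha)$ with
$$N\A(\alpha) = \frac{\theta(\alpha)}{t\alpha(\theta_\C) + \omega\alpha(\theta_2)},$$
and view $N\A(\alpha)$ as a finitely generated module over the image curve $(X, 0) = \alpha(\C, S)$. Simultaneously, I would use Mond's formula $\mu_I(\alpha) = \delta(X, 0) - r(X, 0) + 1$, where the delta invariant is the length $\delta(X, 0) = \dim_\C \O_\C / \alpha^* \O_{X, 0}$, i.e.\ the length of the cokernel of the normalization map.

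Second, the inequality $\A_e$-$\codim(\alpha) \leq \mu_I(\alpha)$ I would obtain by a direct module-theoretic comparison. The image of $t\alpha$ is generated by the Jacobian $(\alpha_1', \alpha_2')$, while $\omega\alpha(\theta_2)$ records the pullback of the cotangent module through $\alpha^*$. An inclusion of submodules, combined with the exact sequence relating $\O_\C$, $\alpha^* \O_{X, 0}$, and the Jacobian ideal of a defining equation of $X$, bounds $\dim_\C N\A(\alpha)$ by the conductor-type length $\delta(X, 0) - r(X, 0) + 1 = \mu_I(\alpha)$. An alternative route is a conservation argument: deform $\alpha$ to a stable perturbation $\alpha_s$ by a $1$-parameter $\A$-unfolding $F(t, s) = (\alpha_s(t), s)$, and compare the semicontinuity of $\A_e$-$\codim$ with the conservation of the vanishing cycles counted by $\mu_I$.

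Third, for the equality under the weighted homogeneous hypothesis, I would exploit the source Euler field $t\partial_t$ and the target Euler field on $(\C^2, 0)$. Weighted homogeneity of $\alpha$ forces a canonical relation between $t\alpha(t\partial_t)$ and $\omega\alpha$ applied to the target Euler field, which equips $N\A(\alpha)$ with a $\Q$-grading of positive weights. A Poincar\'e series comparison with a similarly graded module computing the vanishing cohomology of a stable perturbation then forces the dimensions to coincide.

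The main obstacle is the module-theoretic comparison in the second step: bounding $N\A(\alpha)$ by the conductor-length invariant requires a careful analysis of the interplay between the Jacobian ideal and the pullback ideal $\alpha^* \O_{X, 0}$, and this is the technical heart of Mond's argument in \cite{M2}. Once the bound is established, the weighted homogeneous equality becomes a formal consequence of the graded structure via the Euler derivation.
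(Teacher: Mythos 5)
First, a point of comparison: the paper does not actually prove this statement --- Theorem \ref{Mond2} is quoted from \cite{M2} as background, and the closest thing to a proof in the text is the argument for Theorem \ref{teoprincipal} in Sections 4--5, which generalises Mond's method to singular source curves. So your proposal has to be measured against Mond's original argument and against that generalisation.

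Your outline names the right objects (the quotient $\Theta(\alpha)/(t\alpha(\Theta_{1})+\omega\alpha(\Theta_2))$, the Jacobian ideal of the image equation, conductor-type lengths, $\delta$ and $r$), but the step that carries the whole theorem is left as a black box, and you say so yourself. The missing idea is the concrete identification of the $\A_e$-normal space with $\mathcal C/J_g\O_{X,0}$, where $(X,0)=V(g)$ is the image and $\mathcal C\subset\O_{X,0}$ is the conductor ideal; from $J_g\O_{X,0}\subset\mathcal C\subset\O_{X,0}$ one then gets
$$\A_e\mbox{-}\codim(\alpha)=\dim_\C\frac{\mathcal C}{J_g\O_{X,0}}=\tau(X,0)-\delta(X,0),$$
while Milnor's formula $\mu=2\delta-r+1$ turns $\mu_I(\alpha)=\delta-r+1$ into $\mu(X,0)-\delta(X,0)$, so that
$$\mu_I(\alpha)-\A_e\mbox{-}\codim(\alpha)=\mu(X,0)-\tau(X,0)\geq 0.$$
Without this (or an equivalent) identification there is no inequality; your ``alternative route'' via semicontinuity of $\A_e\mbox{-}\codim$ under a stabilisation does not yield the bound, since semicontinuity compares $\alpha$ with its perturbations rather than with a count of vanishing cycles in the image. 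For the equality case, the Poincar\'e-series machinery you propose is unnecessary and somewhat misdirected: once the displayed identity is in hand, weighted homogeneity of $\alpha$ forces weighted homogeneity of $g$, hence $g\in J_g$ by the Euler relation and $\mu(X,0)=\tau(X,0)$, and equality follows in one line. This is exactly the mechanism the present paper exploits in proving Theorem \ref{teoprincipal}, where $\dim_\C\O_{Y,0}/J_g\O_{Y,0}=\mu_Y$ is used --- valid precisely because $(Y,0)$ is weighted homogeneous; in general that length is $\tau_Y$, which is where the defect $\mu-\tau$, and hence the strict inequality, comes from.
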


Inspired by the previous inequality, the first and second authors consider in \cite{DJ} map germs $f:(X,0)\to(\C^2,0)$, where $(X,0)$ is a plane curve. They define the image Milnor number of $f$ and  obtain the similar inequality.

\begin{teo}[\cite{DJ}] Let $(X,0)$ be a plane curve and let $f:(X,0)\to(\C^2,0)$ be a finite map germ of degree 1 onto its image $(Y,0)$. 
Then, $$\A_e\text{-}\codim(X,f) \leq \mu_{I}(f),$$
with equality if and only if $(Y,0)$ is weighted homogeneous.
\end{teo}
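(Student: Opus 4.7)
The plan is to adapt Mond's proof of Theorem \ref{Mond2} to the singular plane-curve source. Writing $(X,0)=V(g)$ for some $g:(\C^2,0)\to(\C,0)$, the vector fields tangent to $(X,0)$ correspond to logarithmic vector fields for $g$ (modulo torsion). The first step is to present the $\A_e$-tangent space as
$$T^1_{\A_e}(X,f)=\frac{\theta(f)}{tf(\Derlog(g))+\omega f(\theta_{\C^2})},$$
and to verify that its $\C$-dimension is $\A_e\text{-}\codim(X,f)$.

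The second step is to extract a formula for $\mu_I(f)$ in terms of classical invariants. Since $f$ is one-to-one, a stable perturbation of $f$ produces only transverse self-intersections in its image, so by conservation of the delta invariant $\mu_I(f)$ can be expressed as a combination of $\delta(Y,0)$, $\delta(X,0)$, and branch counts, generalising Mond's formula $\mu_I=\delta-r+1$.

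The third and core step is to construct a commutative diagram of $\calo_{Y,0}$-modules linking $T^1_{\A_e}(X,f)$, the conductor ideal of $f$, and the Jacobian module of a local equation $h$ of $(Y,0)$. A diagram chase should reduce the desired inequality to the classical inequality $\mu(h)\ge\tau(h)$ for the isolated plane curve singularity $(Y,0)$. Saito's theorem then delivers the equality case, since $\mu(h)=\tau(h)$ precisely when $(Y,0)$ is weighted homogeneous.

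The hardest step will be the third. Because the source is singular, extra torsion modules supported on $\operatorname{Sing}(X,0)$ appear in the natural exact sequences, and one must verify these contribute equally on both sides of the comparison so that they cancel from the final inequality. A secondary delicate point is that the Euler vector field witnessing the weighted-homogeneous lift lives on $(Y,0)$ rather than on $(X,0)$; its pullback by $f$ need not lie in $\theta_X$, so one has to argue that producing an element modulo $tf(\Derlog(g))+\omega f(\theta_{\C^2})$ is enough to force surjectivity in the appropriate module and hence equality.
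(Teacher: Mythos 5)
First, a caveat: the paper does not actually prove this statement --- it is quoted from \cite{DJ} --- so the only internal trace of its proof is the remark immediately following it, namely the identity $\A_e\text{-}\codim(X,f)+\mu(Y,0)-\tau(Y,0)=\mu_{I}(f)$, together with the analogous machinery developed for space curves in Sections 4--5. Measured against that, your overall strategy is the right one: compute a $T^1$, express $\mu_I(f)$ through delta invariants, and reduce the comparison to $\mu(Y,0)\geq\tau(Y,0)$, with the equality case governed by Saito's criterion for the plane curve $(Y,0)$.

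There are, however, two concrete problems. First, the module you write in step 1, $\Theta(f)/\bigl(tf(\Derlog(g))+\omega f(\Theta_{2})\bigr)$, computes $\A_e\text{-}\codim(f)$, not $\A_e\text{-}\codim(X,f)$: by Theorem \ref{estabilizacao} these differ by $\tau(X,0)$, and that term is not negligible --- in the paper's own computation it is precisely the summand $\mu_X=\tau(X,0)$ that turns $\A_e\text{-}\codim(f)=\delta(f)$ into $\A_e\text{-}\codim(X,f)=\delta(f)+\mu_X=\mu_I(f)$, so dropping it leaves your final count off by $\tau(X,0)$. (You should also use all vector fields tangent to $(X,0)$, i.e.\ $\Derlog(X)$ restricted to $X$, rather than only the fields annihilating $g$.) Second, your step 3 is the entire mathematical content of the theorem and is only announced, never performed: what is actually needed --- see Lemmas \ref{sobrejetora}, \ref{lemmalambda}, \ref{nucleo}, \ref{caracterizacao} and Proposition \ref{delta} for the space-curve analogue --- is (i) a surjective evaluation map $\xi\mapsto\xi(h)$ from the $T^1$ onto $J_{h}\mathcal{O}_{X,0}/J_{h}\mathcal{O}_{Y,0}$, where $h$ is an equation of $(Y,0)$, (ii) an explicit description of its kernel, and (iii) the identification of $\partial h/\partial u\circ f$ and $\partial h/\partial v\circ f$ with a conductor-type element times the minors $J_1,J_2$, which is where $\delta(f)$ enters. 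Without these the ``diagram chase'' has no content. Finally, your secondary worry about lifting the Euler field of $(Y,0)$ to $(X,0)$ is misplaced: in the intended argument weighted homogeneity is used only through Saito's equivalence $\mu(Y,0)=\tau(Y,0)$ applied to the target curve, so no lift to the source is required.
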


Furthermore, following the proof of this result, it is possible obtain an equality,
$$\A_{e}\mbox{-}\codim(X,f) + \mu(Y,0) - \tau(Y,0) = \mu_{I}(f).$$

In this work, we consider map germs $f:(X,0)\to(\C^2,0)$, where $(X,0)$ is an isolated complete intersection singularity (ICIS) of dimension one, then we can consider the $\A_e$-codimension, $\A_e$-$\codim(X,f)$, in the sense of \cite{MM}.
On the other hand, we define the image Milnor number in this case as the number of vanishing cycles in the image of a stable perturbation $f_s:X_s\to B_{\epsilon}$, where $B_{\epsilon}$ is a ball of radius $\epsilon$ centered at the origin in $\C^2$. Here, stable means that $X_s$ is smooth and $f_s$ is stable in the usual sense. 
We show that $\A_e$-$\codim(X,f)= \mu_I(f)$, if $(X,0)$ is irreducible and both $(X,0)$ and $f$ are weighted homogeneous (with the same weights).

Furthermore, if we consider $(X,0)$ parametrized by a map $\alpha:(\C,0)\to(\C^n,0)$, we have also the following equalities.
$$
\mu_{I}(f)=\delta(X,0)+\mu_{I}(f\circ\alpha)
$$
and
$$
\A_e\mbox{-}\codim(f)=\A_e\mbox{-}\codim(f\circ\alpha)-\frac{1}{n-1}\A_e\mbox{-}\codim(\alpha)
$$


\section{Maps on singular varieties}

In this section, we give the basic definitions we will use in the work. 
First, we introduce some notations. We denote $\mathcal{O}_{n}$ the local ring of the analytic functions germs $f:(\C^n,0) \to \C$, $(X,0)\subset (\C^n,0)$ is a germ of analytic variety (possibly with singularities),
$I(X,0)$ is the ideal of $\mathcal O_n$ of functions vanishing on $(X,0)$,  $\O_{X,0}=\O_n/I(X,0)$ is the local ring of $(X,0)$,
$\Theta_n$ is the $\mathcal O_n$-module of the vector fields in $(\C^n,0)$ and $\Theta_{X,0}$ is the $\mathcal{O}_{X,0}$-module of vector fields tangents on $(X,0)$.

We refer to \cite{MM} for the general definition of $\A_e$-codimension for analytic map germs $f:(X,0)\to(\C^p,0)$. The $\A_e$-codimension is equal to
$$
\A_{e}\mbox{-}\codim(f)=\dim_\C\frac{\Theta(f)}{tf(\Theta_{X,0})+ \omega f(\Theta_{p})},
$$
where $\Theta(f)$ is the $\mathcal{O}_{X,0}$-module of vector fields along $f$, that is, holomorphic germs $\xi:(X,0)\to T\C^p$ such that $\pi\circ\xi=f$ (where $\pi:T\C^p\to\C^p$ is the canonical projection). The map
$tf:\Theta_{X,0}\to\Theta(f)$ is the morphism of $\mathcal{O}_{X,0}$-modules given by $tf(\xi)=df\circ \xi$
and $\omega f:\Theta_p\to\Theta(f)$  is the morphism of $\mathcal{O}_{p}$-modules given by $\omega f(\eta)=\eta\circ f$ (where $\Theta(f)$ is considered as an $\mathcal{O}_{p}$-module via $f^*:\O_p\to\O_{X,0}$). We can denote $T\A_{e} f=tf(\Theta_{X,0})+ \omega f(\Theta_{p})$.

We say that $f$ is \emph{$\mathscr A$-finite} if this codimension is finite. We say that $f$ has \emph{finite singularity type} if
$$
\dim_\C\frac{\Theta(f)}{tf(\Theta_{X,0})+(f^*m_p)\Theta_{p}}<\infty.
$$

In the case that $(X,0)$ is an ICIS and $f$ has finite singularity type, we have the following important result due to Mond and Montaldi \cite{MM}:

\begin{thm}\label{estabilizacao} Let $(X,0)$ be an ICIS and assume $f:(X,0)\to(\C^p,0)$ has finite singularity type. The minimal number of parameters in a versal unfolding of $f$ is equal to the number
$$
\mathscr A_e\text{-}\codim(X,f):=\mathscr A_e\text{-}\codim(f)+\tau(X,0),
$$
where $\tau(X,0)$ is the Tjurina number of $(X,0)$, that is, the minimal number of parameters in a versal deformation of $(X,0)$.
\end{thm}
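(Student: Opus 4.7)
The plan is to reduce the theorem to the standard infinitesimal criterion for versality applied to the deformation functor of pairs $((X,0),f)$. A $k$-parameter unfolding of $f$ is a flat deformation $\pi:\mathcal{X}\to(\C^k,0)$ of $(X,0)$ together with a map germ $F:\mathcal{X}\to(\C^p\times\C^k,0)$ over $(\C^k,0)$ whose restriction to the special fiber is $f$, modulo the equivalence generated by diffeomorphisms of $\mathcal{X}$ over $(\C^k,0)$ and diffeomorphisms of $\C^p\times\C^k$ preserving the $\C^k$-factor. Two natural subfunctors are present: the one of deformations of $(X,0)$ alone (obtained by forgetting $F$) and the one of unfoldings of $f$ fixing $(X,0)$.

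The first step is to identify the tangent space $T^{1}_{\mathrm{unf}}(f)$ of the full functor. The forgetful map $T^{1}_{\mathrm{unf}}(f)\to T^{1}_{X,0}$ takes an unfolding to the underlying deformation of $(X,0)$. Since $(X,0)$ is an ICIS, Tjurina's theorem identifies $T^1_{X,0}$ with a vector space of dimension $\tau(X,0)$. The kernel consists of unfoldings fixing $(X,0)$, which by Mather's theory extended to singular sources coincides with $\Theta(f)/T\A_ef$ of dimension $\A_e\text{-}\codim(f)$. Representing $(X,0)$ as the zero set of $\phi:(\C^n,0)\to(\C^{n-d},0)$ and noting that $f$ lifts to a map germ $\tilde f:(\C^n,0)\to(\C^p,0)$, every first-order perturbation of $\phi$ can be paired with the constant extension of $\tilde f$ to produce a compatible unfolding; this shows the forgetful map is surjective. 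Hence we obtain the short exact sequence
$$
0\longrightarrow\Theta(f)/T\A_ef\longrightarrow T^{1}_{\mathrm{unf}}(f)\longrightarrow T^{1}_{X,0}\longrightarrow 0,
$$
and so $\dim_{\C} T^{1}_{\mathrm{unf}}(f)=\A_e\text{-}\codim(f)+\tau(X,0)$.

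The final step is to invoke the geometric versality theorem of Martinet--Damon type: under the hypothesis of finite singularity type, an unfolding is versal if and only if its Kodaira--Spencer map is surjective onto $T^{1}_{\mathrm{unf}}(f)$. This is where the bulk of the work and the main obstacle lies, since Malgrange's preparation theorem must be combined with Artin's approximation in a relative setting over a singular source. The finite singularity type hypothesis is precisely what guarantees that the relevant $\calo_p$-modules are finitely generated, which is exactly the input needed so that the preparation theorem applies to turn infinitesimal versality into geometric versality. Once the criterion is in place, the minimal number of parameters in a versal unfolding equals $\dim_{\C} T^{1}_{\mathrm{unf}}(f)$, yielding
$$
\A_e\text{-}\codim(X,f)=\A_e\text{-}\codim(f)+\tau(X,0),
$$
as stated.
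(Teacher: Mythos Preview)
The paper does not prove this theorem at all: it is stated as a result of Mond and Montaldi and attributed to the reference \cite{MM}, with no proof or sketch given in the present paper. So there is no ``paper's own proof'' to compare your proposal against.

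That said, your outline is broadly in the spirit of the argument in \cite{MM}. The short exact sequence
\[
0\longrightarrow \Theta(f)/T\A_e f\longrightarrow T^{1}_{\mathrm{unf}}(f)\longrightarrow T^{1}_{X,0}\longrightarrow 0
\]
is the right organizing principle, and your justification of surjectivity (extend a first-order deformation of $(X,0)$ by keeping $\tilde f$ constant) is correct. Where your sketch is thin is exactly where you flag it: the passage from infinitesimal to geometric versality. In \cite{MM} this is handled not by a direct Martinet--Damon argument on the pair $((X,0),f)$, but by recasting the problem via an auxiliary map $(\phi,\tilde f):(\C^n,0)\to(\C^{n-d}\times\C^p,0)$ and working with $\mathcal K_V$-equivalence for $V=\{0\}\times\C^p$; this places the problem inside Damon's framework of geometric subgroups, where the versality theorem is already available. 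Your appeal to ``Artin approximation in a relative setting over a singular source'' is not what is actually needed, and would be a harder route than the one taken in the literature. If you want a self-contained proof, the cleanest path is to follow the $\mathcal K_V$ reformulation rather than to build a bespoke preparation theorem over $(X,0)$.
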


As a corollary of this theorem, we have the following consequences. The second one is a generalization of the Mather-Gaffney criterion (see \cite{wall}).

\begin{cor}\label{infinitesimal} Let $(X,0)$ be an ICIS and assume $f:(X,0)\to(\C^p,0)$ has finite singularity type. 
\begin{enumerate}
\item $f$ is stable if and only if $X$ is smooth and $f$ is stable in the usual sense.
\item $f$ is $\mathscr A$-finite if and only if $f$ has isolated instability (i.e., there is a representative $f:X\to B_\epsilon$ such that for any $y\in B_\epsilon\setminus\{0\}$, the multigerm of $f$ at $f^{-1}(y)\cap S$ is stable).
\end{enumerate}
\end{cor}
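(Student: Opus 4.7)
The plan is to deduce both items directly from Theorem \ref{estabilizacao}, combined with standard deformation theory of an ICIS and the classical Mather-Gaffney criterion for maps from a smooth source.

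For item (1), stability of $f$ is by definition the property that every unfolding is trivial, equivalently that the versal unfolding has zero parameters. By Theorem \ref{estabilizacao} this is equivalent to $\A_e\text{-}\codim(f) + \tau(X,0) = 0$. Since both summands are non-negative, they must vanish separately. For an ICIS the Tjurina number $\tau(X,0)$ vanishes if and only if $(X,0)$ is smooth, and once $X$ is smooth the module $\Theta_{X,0}$ coincides with the usual module of tangent vector fields on $(\C^n,0)$; the cokernel formula for $\A_e\text{-}\codim(f)$ then reduces to the standard infinitesimal stability criterion for map germs between manifolds, recovering the classical notion. The converse follows by reading the same chain of equivalences backwards.

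For item (2), the idea is to globalize the cokernel defining $\A_e\text{-}\codim$. The finite singularity type hypothesis is exactly what is needed to ensure that the module $\mathcal{F} := \Theta(f)/(tf(\Theta_X) + \omega f(\Theta_p))$ spreads out to a coherent sheaf on a small representative $f: X \to B_\epsilon$, whose stalk at any multigerm computes the local $\A_e$-codimension. Finiteness of $\A_e\text{-}\codim(f)$ at the origin then amounts to $\mathcal{F}$ having zero-dimensional support near $0$, which is equivalent to the vanishing of every stalk at points $x \neq 0$. Since the singular point of $X$ is isolated, shrinking $\epsilon$ places $f$ over the smooth locus of $X$ for every $y \neq 0$, and the pointwise version of item (1) identifies vanishing of the local $\A_e$-codimension with stability of the corresponding multigerm. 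Reading the argument backwards, isolated instability together with finite singularity type produces a coherent sheaf of finite-dimensional support, hence finite $\A_e$-codimension.

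The main obstacle will be the coherence step in (2). Away from the isolated singularity of $X$ the source is smooth and one is in the classical Mather-Gaffney setting, so the real subtlety is verifying that Mond-Montaldi's extension works correctly across the singular point: one must check that finite singularity type is indeed the right hypothesis to guarantee coherence of the relevant modules (in particular $\Theta_X$ and $\Theta(f)$) in a full neighborhood including the singularity of $X$, so that the support-theoretic conclusion can actually be drawn. Once that is in hand, the translation between vanishing of stalks and stability of multigerms is a pointwise application of item (1) at smooth points.
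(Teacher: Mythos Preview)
The paper gives no proof for this corollary; it is stated immediately after Theorem~\ref{estabilizacao} with only the remark that item~(2) is a generalization of the Mather--Gaffney criterion, together with a reference to Wall. Your proposal supplies exactly the argument the paper's framing implies: item~(1) by reading the additivity formula $\A_e\text{-}\codim(X,f)=\A_e\text{-}\codim(f)+\tau(X,0)$ and using non-negativity of both summands, and item~(2) by the standard sheafification/coherence argument behind Mather--Gaffney, with finite singularity type providing the needed finiteness of the pushforward. Your identification of the coherence step across the singular point as the only genuine subtlety is accurate, and this is precisely what the Mond--Montaldi machinery is designed to handle; at all nearby points the source is smooth and one is back in the classical setting. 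So the approach is correct and matches the paper's intended (but unwritten) argument.
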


Another consequence of the theorem is the existence of stabilizations, at least in the range of Mather's nice dimensions. Given an analytic map germ $f:(X,0)\to(\C^p,0)$, a \emph{stabilization} is a 1-parameter unfolding $F:(\mathcal X,0)\to (\C\times\C^p)$ with the property that for all $s\ne0$ small enough, the map $f_s:X_s\to B_\epsilon$ is stable, where $B_\epsilon$ is a ball of radius $\epsilon$ in $\C^p$.
By Theorem \ref{estabilizacao}, if $f:(X,0)\to(\C^p,0)$ is $\mathscr A$-finite, a stabilization of $f$ exists if $(r,p)$ are nice dimensions in the Mather's sense ($r=\dim(X,0)$). 

We will consider $f:(X,0)\to(\C^2,0)$, where $(X,0)\subset(\C^n,0)$ is a curve ICIS. 
Observe that by Corollary \ref{infinitesimal}, $f$ is stable if and only if $X$ is smooth and $f$ is an immersion with only transverse double points, called nodes. 
As a consequence, $f$ is $\mathscr A$-finite if and only if $f$ is finite and has degree one onto its image. 

\section{Weighted homogeneous maps and varieties}

We introduce the definition of weighted homogeneous for variety germs and map germs. 
We fix positive integer numbers $w_1,\cdots,w_n$ such that $\gdc(w_1,\cdots,w_n)=1$. 
Given $h \in \mathcal{O}_n$, we say that $h$ is weighted homogeneous of type $(w_1,\cdots,w_n;d)$ if it satisfies
$$h(t^{w_1}x_1,\cdots,t^{w_n}x_n)=t^d h(x_1,\cdots,x_n), \quad \forall x \in \C^n, \quad \forall t \in \C.$$
In this case, we call $(w_1,\cdots,w_n)$ the weights of $h$ and $d$ the weighted degree of $h$.
Let $(X,0)\subset(\C^n,0)$ be a variety germ, which is the zero set of an ideal $I \subset \mathcal{O}_n$, we say that $(X,0)$ is weighted homogeneous of type $(w_1,\cdots,w_n;d_1,\cdots,d_m)$ 
if $I$ can be generated by weighted homogeneous map germs $\phi_1,\cdots,\phi_m$ where each $\phi_i$ is weighted homogeneous of type $(w_1,\cdots,w_n,d_i)$, $i=1,\cdots,n$.
Finally, let $f:(X,0)\to (\C^p,0)$ be a map germ, with $(X,0)\subset (\C^n,0)$ a weighted homogeneous variety germ of type $(w_1,\cdots,w_n;d_1,\cdots,d_m)$. 
We say that $f$ is weighted homogeneous if $f$  is the restriction of a map germ $\tilde{f}=(f_1,\cdots,f_p):(\C^n,0)\to(\C^p,0)$ where each $f_j \in \mathcal{O}_n$ is weighted homogeneous
and we say that $f$ is {\it consistent} with $(X,0)$ if each $f_j$ is weighted homogeneous with the weights $(w_1,\cdots,w_n)$, we observe that the weighted degree may be different.

It follows from \cite{NOT} that if $(X,0)$ is a curve ICIS weighted homogeneous of type $(w_1,\cdots,w_n;d_1,\cdots,d_{n-1})$, then the Milnor number satisfies
$$\mu(X,0)=\frac{d_1\cdots d_{n-1}\left(d_1 + \cdots+ d_{n-1} - w_1 - \cdots - w_n\right)}{w_1 \cdots w_n}+1.$$
Moreover, if $(X,0)$ is irreducible, then $\frac{d_1\cdots d_{n-1}}{w_1 \cdots w_n}=1$, thus the above equality is given by 
$$\mu(X,0)=d_1 + \cdots + d_{n-1} - w_1 - \cdots - w_n + 1.$$

Wahl \cite{wahl} shows how to compute the generators of $\Theta_{X,0}$ when $(X,0)$ is a weighted homogeneous ICIS. 
In particular, if we suppose $(X,0)=h^{-1}(0)$ a curve, where $h=(h_1, \cdots, h_{n-1})$, then $\Theta_{X,0}$  is generated by vector fields $h_i \frac{\partial}{\partial x_j}$ with $i=1,\cdots,n-1$ and $j=1, \cdots, n$, the Euler field $\epsilon= (w_1x_1,\cdots,w_nx_n)$ and $\mathcal H = (\xi_1,\cdots,\xi_n),$
where
\begin{align*}
\mathcal H &= \left| \begin{array}{ccc}
\frac{\partial}{\partial x_1}&\cdots & \frac{\partial}{\partial x_n}  \\
\frac{\partial h_1}{\partial x_1}&\cdots & \frac{\partial h_{1}}{\partial x_n}  \\
\vdots                                         & \ddots & \vdots \\
\frac{\partial h_{n-1}}{\partial x_1}&\cdots & \frac{\partial h_{n-1}}{\partial x_n}\\
\end{array} \right| \\
& = (-1)^{1+1} \left| \begin{array}{cccc}
\frac{\partial \widehat{h}_1}{\partial x_1} &\frac{\partial h_1}{\partial x_2}&\cdots & \frac{\partial h_{1}}{\partial x_n}  \\
\vdots                   & \vdots                      & \ddots & \vdots \\
\frac{\partial h_{n-1}}{\partial x_1} & \frac{\partial h_{n-1}}{\partial x_2}&\cdots & \frac{\partial h_{n-1}}{\partial x_n}\\
\end{array} \right| \frac{\partial}{\partial x_1} + \cdots \\
& \quad +(-1)^{n+1} \left| \begin{array}{cccc}
\frac{\partial h_1}{\partial x_1}&\cdots & \frac{\partial h_{1}}{\partial x_{n-1}}& \frac{\partial \widehat{h}_{1}}{\partial x_{n}}  \\
\vdots                                         & \ddots & \vdots & \vdots\\
\frac{\partial h_{n-1}}{\partial x_1}&\cdots & \frac{\partial h_{n-1}}{\partial x_{n-1}}& \frac{\partial h_{n-1}}{\partial x_{n}}\\
\end{array} \right| \frac{\partial}{\partial x_{n}}\\
&=\xi_1 \frac{\partial}{\partial x_1} + \cdots + \xi_n \frac{\partial}{\partial x_n},
\end{align*}
we use \ $\widehat{}$ \ to indicate that we should exclude this column. 

Let $J_1,J_2\in\O_{X,0}$ be defined as
\begin{align*}
J_i &= \left| \begin{array}{ccc}
\frac{\partial h_1}{\partial x_1} &\cdots & \frac{\partial h_{1}}{\partial x_n}\\
\vdots                                         & \ddots & \vdots                                                 \\
\frac{\partial h_{n-1}}{\partial x_1}   &\cdots & \frac{\partial h_{n-1}}{\partial x_n} \\
\frac{\partial f_i}{\partial x_1}   & \cdots &  \frac{\partial f_i}{\partial x_n} \\ \end{array} \right|  =(-1)^{n-1}\left(\frac{\partial f_i}{\partial x_1} \xi_1+  \cdots+
\frac{\partial f_i}{\partial x_n}\xi_n\right),\\
\end{align*}
with $i=1,2$.

We have the matrix 
$$df= \left( \begin{array}{ccc}
\frac{\partial f_1}{\partial {x}_1}&\cdots & \frac{\partial f_1}{\partial {x}_{n}}  \\
\frac{\partial f_2}{\partial x_1}&\cdots & \frac{\partial f_2}{\partial x_n}  \\
\end{array} \right).
$$

Thus, $tf(\Theta_{X,0})$ is generated by vector fields
$$\left(\frac{\partial f_1}{\partial {x}_1} h_1, \frac{\partial f_2}{\partial {x}_1}h_1\right),\cdots,\left(\frac{\partial f_1}{\partial {x}_n}h_1,\frac{\partial f_2}{\partial {x}_n}h_1\right),\cdots,$$
$$\left(\frac{\partial f_1}{\partial {x}_1}h_{n-1},\frac{\partial f_2}{\partial {x}_1}h_{n-1}\right),\cdots,\left(\frac{\partial f_1}{\partial {x}_n}h_{n-1},\frac{\partial f_2}{\partial {x}_n}h_{n-1}\right),$$
\begin{align*}
df \circ \epsilon & = \left(w_1x_1\frac{\partial f_1}{\partial {x}_1}+\cdots+w_nx_n\frac{\partial f_1}{\partial {x}_{n}},w_1x_1\frac{\partial f_2}{\partial {x}_1}+\cdots+w_nx_n\frac{\partial f_2}{\partial {x}_{n}}\right)\\
&=(l_1 \beta_ 1 f_1,l_2 \beta_2 f_2)\\
\end{align*}
and
\begin{align*}
df\circ\mathcal H& = \left(\xi_1 \frac{\partial f_1}{\partial x_1}+ \cdots+  \xi_n \frac{\partial f_1}{\partial x_{n}} , \xi_1 \frac{\partial f_2}{\partial x_1}+ \cdots+  \xi_n \frac{\partial f_2}{\partial x_{n}}\right)\\
&= (-1)^{n+1} \left(J_1,J_2\right).
\end{align*}



\section{Maps between curves}

Let $(X,0)\subset(\C^n,0)$ be an irreducible curve ICIS, weighted homogeneous of type $(w_1,\cdots,w_n;d_1,\cdots,d_{n-1})$. 
Apart from the case that $(X,0)$ is smooth (which is a trivial case), it follows that $(X,0)$ can be parametrized by a monomial map 
$\alpha:(\C,0)\to(\C^n,0)$ of the form
$$ \alpha(t)=(\alpha_1 t^{w_1},\cdots,\alpha_n t^{w_n}),\quad (\alpha_1,\cdots,\alpha_n)\in\C^n \setminus\{0\}.$$

The local ring of $(X,0)$ is $\O_{X,0}=\C\{t^{w_1},\cdots,t^{w_n}\}$, which is a subring of $\O_1$. We denote by $\Gamma_X$ the associated semigroup. Since $(X,0)$ is irreducible, $\Gamma_X$ has a conductor $c$, which satisfies that $c-1 \notin \Gamma_X$ and if $n\in \N$, with $n \geq c$, then $n\in \Gamma_X$. We have that $\Gamma_X$ is symmetric, because $(X,0)$ is an ICIS, thus $\#\N\setminus \Gamma_X=c/2$. 

The main invariant of $(X,0)$ is the delta invariant, which is in this case 
$$\delta_X=\dim_{\C}\frac{\C\left \{ t \right \}}{\C\left \{ t^{w_1},\cdots,t^{w_n} \right \}}=\#\N\setminus \Gamma_X.$$
Since $(X,0)$ is irreducible, the Milnor number $\mu(X,0)$, which to simplify the notation we will denote by $\mu_X$, is $\mu_X=2\delta_X=c$, by Milnor's formula. 
For more details, see for instance \cite{HeHer}.

Let $f:(X,0)\to(\C^2,0)$ be a finite map germ of degree 1 onto its image $(Y,0)$ and $f$ is consistent with $(X,0)$. We denote by $l_1,l_2$ the weighted degrees of $f_1,f_2$, respectively. 
Then, $(Y,0)$ is also a weighted homogeneous irreducible plane curve with weights $(l_1, l_2)$ and it admits a parametrization of the form
$$f(\alpha(t))=(\beta_1 t^{l_1},\beta_2 t^{l_2}),\quad \beta_1,\beta_2\in\C\setminus\{0\}.$$
We have $\O_{Y,0}=\C\{t^{l_1},t^{l_2}\}\subset \O_{X,0}$,  $\Gamma_Y=\langle l_1,l_2\rangle\subset \Gamma_X$ and $\delta_Y=(l_1-1)(l_2-1)/2$. Finally, we assume that $(Y,0)$ has defining equation $g(u,v)=0$, where $g\in\C\{u,v\}$ is weighted homogeneous.
Thus, $g(u,v)=\beta_{2}^{l_1}u^{l_2}-\beta_{1}^{l_2}v^{l_1}$ is weighted homogeneous  of type $(l_1, l_2;l_1l_2)$.

We consider $f=i\circ \bar{f}$, where $\bar{f}:(X,0)\to(Y,0)$ is the restriction of $f$ and $i:(Y,0)\to(\C^2,0)$ is the inclusion map. We have,
$$
J_{g}\mathcal{O}_{X,0} = \left \{ a \left(\frac{\partial g}{\partial u}\circ f \right)+ b \left(\frac{\partial g}{\partial v}\circ f \right) ; \  a, b \in \mathcal{O}_{X,0} \right \},
$$
and

$
J_{g}\mathcal{O}_{Y,0} = \left \{\left(a \circ \bar{f} \right) \left( \left(\frac{\partial g}{\partial u} \circ i \right) \circ  \bar{f} \right) + \left( b \circ \bar{f} \right) \left( \left( \frac{\partial g}{\partial v}  \circ i \right) \circ \bar{f} \right) ; \  a, b \in \mathcal{O}_{Y,0} \right \},
$
where $J_g$ be the Jacobian ideal of $g$, thus $J_{g}\mathcal{O}_{Y,0}  \subset J_{g}\mathcal{O}_{X,0}$.

We consider the evaluation map
$$ev: \frac{\Theta(f)}{tf(\Theta_{X,0})+ wf(\Theta_{2})} \longrightarrow \frac{J_{g}\mathcal{O}_{X,0}}{J_{g}\mathcal{O}_{Y,0}}$$
given by $ev([\xi]) = [\xi(g)]$. 

\begin{lem}\label{sobrejetora}
The evaluation map is well-defined and surjective, hence
$$ \A_{e}\mbox{-}\codim(f) = \mbox{\rm dim}_{\C} \ker(ev) + \mbox{\rm dim}_{\C} \frac{J_{g}\mathcal{O}_{X,0}}{J_{g}\mathcal{O}_{Y,0}}.$$
\end{lem}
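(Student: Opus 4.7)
The plan is to verify that the evaluation map is well-defined on the quotient and then exhibit a direct preimage for each element of the target to get surjectivity; the dimension formula then follows formally from rank-nullity applied to the short exact sequence
\[
0\longrightarrow \ker(ev)\longrightarrow \frac{\Theta(f)}{tf(\Theta_{X,0})+\omega f(\Theta_{2})}\stackrel{ev}{\longrightarrow}\frac{J_{g}\mathcal{O}_{X,0}}{J_{g}\mathcal{O}_{Y,0}}\longrightarrow 0.
\]

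For well-definedness I would first check that the recipe $ev(\xi)=\xi(g):=\xi_{1}\bigl(\tfrac{\partial g}{\partial u}\circ f\bigr)+\xi_{2}\bigl(\tfrac{\partial g}{\partial v}\circ f\bigr)$ actually lands in $J_{g}\mathcal{O}_{X,0}$, which is immediate from the definition of that ideal. The content is showing that the two generating types of $T\mathcal{A}_{e}f$ map into $J_{g}\mathcal{O}_{Y,0}$. For a vector field of the form $tf(\eta)=(\eta(f_{1}),\eta(f_{2}))$ with $\eta\in\Theta_{X,0}$, the chain rule gives $ev(tf(\eta))=\eta(g\circ f)=0$, since $f$ factors through $(Y,0)=\{g=0\}$. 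For a vector field of the form $\omega f(\zeta)=(\zeta_{1}\circ f,\zeta_{2}\circ f)$ with $\zeta=\zeta_{1}\partial_{u}+\zeta_{2}\partial_{v}\in\Theta_{2}$, one computes $ev(\omega f(\zeta))=\zeta(g)\circ f$. Now $\zeta(g)\in J_{g}\subset\O_{2}$, and since $f=i\circ\bar f$ the composition $\zeta(g)\circ f$ lies in the image of $\bar f^{*}:\mathcal{O}_{Y,0}\to\mathcal{O}_{X,0}$; thus $ev(\omega f(\zeta))\in J_{g}\mathcal{O}_{Y,0}$, which is exactly what is needed.

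Surjectivity is even simpler: a general element of $J_{g}\mathcal{O}_{X,0}$ is of the shape $a\bigl(\tfrac{\partial g}{\partial u}\circ f\bigr)+b\bigl(\tfrac{\partial g}{\partial v}\circ f\bigr)$ with $a,b\in\mathcal{O}_{X,0}$, and the vector field $\xi=(a,b)\in\Theta(f)$ maps to exactly this element under $ev$. Composing with the quotient $J_{g}\mathcal{O}_{X,0}\to J_{g}\mathcal{O}_{X,0}/J_{g}\mathcal{O}_{Y,0}$ gives surjectivity onto the target.

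Finally, since $\mathcal{A}_{e}\text{-}\codim(f)=\dim_{\C}\Theta(f)/T\mathcal{A}_{e}f$ is assumed finite in the situations of interest, the stated equality of dimensions is just additivity of dimension in the short exact sequence above. The only step that requires a bit of care is the $\omega f$ half of the well-definedness check, where one must remember that $f$ is taken with image in the hypersurface $(Y,0)$ so that $\zeta(g)\circ f$ is automatically pulled back from $\mathcal{O}_{Y,0}$; this is the main conceptual point of the lemma.
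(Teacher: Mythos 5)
Your proposal is correct and follows essentially the same route as the paper's proof: the $tf$ part via $g\circ f$ vanishing on $(X,0)$ together with tangency of $\eta$, the $\omega f$ part via the factorisation $f=i\circ\bar f$ so that $\zeta(g)\circ f$ is pulled back from $\mathcal{O}_{Y,0}$, surjectivity by exhibiting $\xi=(a,b)$, and the dimension count from the induced short exact sequence.
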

\begin{proof}
Let us see that $ev$ is well-defined. 
If $\xi \in tf(\Theta_{X,0})$, then $\xi = tf(\eta)$, for some $\eta \in \Theta_{X,0}$, hence $\eta(l)=0$ in $\mathcal{O}_{X,0}$, for all $l\in I(X,0)$.
Since $g \circ \tilde f = \lambda_1 h_1 + \cdots \lambda_{n-1} h_{n-1}$, with $\lambda_i \in \mathcal{O}_{n}$, $i=1,\cdots,n$, we have 
$$ 
\xi(g) =tf(\eta)(g)=(df\circ \eta)(g) = \eta(g \circ \tilde{f})= \eta(\lambda_1 h_1 + \cdots \lambda_{n-1} h_{n-1})=0.$$

If $\xi \in \omega f(\Theta_{2})$, then $\xi = \zeta \circ f$, for some $\zeta \in \Theta_{2}$. Write  $\zeta = \tilde{a} \frac{\partial}{\partial u} + \tilde{b} \frac{\partial}{\partial v}$, with $\tilde a,\tilde b\in\O_2$. Thus, 
$$\xi=\zeta \circ f =  \left( \tilde{a} \circ f \right) \frac{\partial}{\partial u} + \left(\tilde{b} \circ f \right) \frac{\partial}{\partial v}.
$$

Then, 
\begin{align*}
\xi(g)& =\left( \tilde{a} \circ f \right) \frac{\partial g}{\partial u} \circ f + \left(\tilde{b} \circ f \right) \frac{\partial g}{\partial v} \circ f \\
& = \left( \left(\tilde{a} \circ i \right) \circ \bar{f} \right) \left( \left(\frac{\partial g}{\partial u} \circ i \right) \circ  \bar{f} \right) + \left( \left(\tilde{b} \circ i \right) \circ \bar{f} \right) \left( \left( \frac{\partial g}{\partial v}  \circ i \right) \circ \bar{f} \right),
\end{align*}
with $\tilde{a} \circ i ,\tilde{b} \circ i \in \mathcal{O}_{Y,0}$. Hence, $\xi(g) \in J_{g}\mathcal{O}_{Y,0}$.

If $\xi= a \frac{\partial}{\partial u} + b \frac{\partial}{\partial v}$, with $a$, $b \in \mathcal{O}_{X,0}$, then
$\xi(g) = a \frac{\partial g}{\partial u} \circ f + b \frac{\partial g}{\partial v} \circ f$, hence $ev$ is surjective.
\end{proof}

The evaluation map is not injective in general, so we need to know its kernel in order to compute the $\A_e$-codimension of $f$.
To this we need the following result.

\begin{lem} \label{lemmalambda} With the above notation, we have
$$ 
\frac{\partial g}{\partial u} \circ f  = C \lambda_f J_2  \quad \mbox{and} \quad \frac{\partial g}{\partial v} \circ f = - C  \lambda_f  J_1,
$$
when $\lambda_f=t^{\mu_Y-\mu_X}\in \O_{X,0}$.
\end{lem}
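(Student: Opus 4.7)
The driving observation is that $g\circ\tilde f\in I(X,0)$, since $f(X)\subset Y=g^{-1}(0)$. Because $\mathcal H$ is tangent to $X$, it acts as a derivation on $\O_{X,0}$, so applying $\mathcal H$ to $g\circ f=0$ via the chain rule, together with $\mathcal H(f_i)=(-1)^{n-1}J_i$ (read off from the determinantal definition of $J_i$), yields the syzygy
$$\left(\frac{\partial g}{\partial u}\circ f\right)J_1+\left(\frac{\partial g}{\partial v}\circ f\right)J_2=0 \quad\text{in }\O_{X,0}.$$
Since $(X,0)$ is irreducible, $\O_{X,0}$ is a domain, so there is a unique $\lambda$ in its fraction field with $\tfrac{\partial g}{\partial u}\circ f=\lambda J_2$ and $\tfrac{\partial g}{\partial v}\circ f=-\lambda J_1$ simultaneously; the remaining task is to identify $\lambda=C\,t^{\mu_Y-\mu_X}$ for a suitable constant $C$, so that both identities share the common factor $C\lambda_f$.

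To pin down $\lambda$, pull everything back to $\O_1=\C\{t\}$ via $\alpha^*$. All terms in the syzygy are weighted homogeneous, hence monomials in $t$, so $\lambda$ is itself a monomial $C\,t^m$. From $g(u,v)=\beta_2^{l_1}u^{l_2}-\beta_1^{l_2}v^{l_1}$ and $f\circ\alpha=(\beta_1 t^{l_1},\beta_2 t^{l_2})$, a direct calculation gives $(\partial g/\partial u\circ f)(\alpha(t))=l_2\beta_2^{l_1}\beta_1^{l_2-1}\,t^{l_1(l_2-1)}$. For $J_2(\alpha(t))$, tangency of $\mathcal H$ along the parametrization forces $\mathcal H\circ\alpha=\phi(t)\alpha'(t)$ for some scalar $\phi$; since each cofactor $\xi_k$ is weighted homogeneous of degree $(\mu_X-1)+w_k$, one obtains $\phi(t)=K\,t^{\mu_X}$ for a single nonzero constant $K$ (the $k$-independence is forced because $\phi$ is intrinsic to $\mathcal H$, and the $t$-exponent $\mu_X$ comes out of the Milnor formula $\mu_X=d_1+\cdots+d_{n-1}-w_1-\cdots-w_n+1$). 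Hence
$$J_i(\alpha(t))=(-1)^{n-1}\phi(t)(f_i\circ\alpha)'(t)=(-1)^{n-1}K\,l_i\beta_i\,t^{\mu_X+l_i-1},$$
and subtracting exponents gives $m=l_1(l_2-1)-(\mu_X+l_2-1)=\mu_Y-\mu_X$, using $\mu_Y=(l_1-1)(l_2-1)$. The analogous comparison between $\partial g/\partial v\circ f$ and $J_1$ produces the same $m$ and the same $C$, as dictated by the syzygy.

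The main obstacle is the identity $\mathcal H\circ\alpha=K\,t^{\mu_X}\alpha'(t)$: tangency only guarantees proportionality to $\alpha'$, and extracting a single scalar $\phi(t)$ of precise $t$-degree $\mu_X$ (and verifying $K\neq 0$) requires a careful accounting of the weighted degrees of the cofactors $\xi_k$ combined with the Milnor formula for irreducible weighted homogeneous ICIS. Once that identity is established, the rest of the proof reduces to matching monomials in $t$, and the common constant $C$ can be read off, say from the $u$-derivative, as $C=\beta_2^{l_1-1}\beta_1^{l_2-1}/((-1)^{n-1}K)$.
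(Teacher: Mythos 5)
Your argument is correct, and it arrives at the crucial identity $J_i\circ\alpha=(-1)^{n-1}K\,l_i\beta_i\,t^{\mu_X+l_i-1}$ by a genuinely different mechanism than the paper. The paper makes no use of the syzygy or of the tangency of $\mathcal H$ along the normalisation: it computes $J_j\circ\alpha$ head-on by writing out the defining determinant, feeding the Euler relations $d_ih_i=\sum_k w_kx_k\,\partial h_i/\partial x_k$ and $l_jf_j=\sum_k w_kx_k\,\partial f_j/\partial x_k$ into a linear system with unknowns $w_k\alpha_kt^{w_k}$, and solving by Cramer's rule; the exponent $l_j+\mu_X-1$ then falls out of the same weighted-degree bookkeeping you use, and the common constant $C=\beta_1^{l_2-1}\beta_2^{l_1-1}/A$ is simply read off by comparing the two explicit monomials (your $(-1)^{n-1}K$ is the paper's $A$). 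Your route --- differentiating $g\circ\tilde f\in I(X,0)$ along $\mathcal H$ to get the a priori syzygy $(\frac{\partial g}{\partial u}\circ f)J_1+(\frac{\partial g}{\partial v}\circ f)J_2=0$, and then writing $\mathcal H\circ\alpha=\phi\,\alpha'$ with $\phi=Kt^{\mu_X}$ so that $J_i\circ\alpha=(-1)^{n-1}\phi\cdot(f_i\circ\alpha)'$ --- is more conceptual: it explains \emph{why} a single factor $\lambda$ serves both equations, which in the paper is just an after-the-fact coincidence of two parallel computations, and it packages the degree count into the single intrinsic quantity $\phi$ rather than into a choice of column for Cramer's rule. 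What the paper's computation buys is that it stays purely algebraic and never invokes the geometry of the smooth locus. The one step you should make fully explicit (the paper is equally terse about its assertion $A\neq0$) is $K\neq0$: since $(X,0)$ is an ICIS, $dh$ has rank $n-1$ at every point of $X\setminus\{0\}$, so the maximal minors $\xi_k$ cannot all vanish along $\alpha(t)$ for $t\neq0$, whence $\mathcal H\circ\alpha\not\equiv0$ and $\phi\not\equiv0$. With that supplied, your exponent $m=l_1(l_2-1)-(\mu_X+l_2-1)=\mu_Y-\mu_X$ and constant $C=\beta_1^{l_2-1}\beta_2^{l_1-1}/((-1)^{n-1}K)$ agree exactly with the paper's conclusion.
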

\begin{proof}
By definitions, we obtain
$$
\frac{\partial g}{\partial u} \circ f  =  l_2 \beta_{2}^{l_1} \beta_1^{l_2-1}t^{l_1(l_2-1)} \quad \mbox{and}  \quad \frac{\partial g}{\partial v} \circ f  =  -l_1 \beta_{1}^{l_2} \beta_2^{l_1-1} t^{l_2(l_1-1)}.  
$$

We also have
\begin{align*}
J_j&=\left| \begin{array}{ccc}
t^{d_1-w_1}\frac{\partial h_1}{\partial x_1}(\alpha_1,\cdots,\alpha_n) &\cdots & t^{d_1-w_n}\frac{\partial h_{1}}{\partial x_n}(\alpha_1,\cdots,\alpha_n)\\
\vdots                                         & \ddots & \vdots                                                 \\
t^{d_{n-1}-w_1}\frac{\partial h_{n-1}}{\partial x_1} (\alpha_1,\cdots,\alpha_n)  &\cdots & t^{d_{n-1}-w_n}\frac{\partial h_{n-1}}{\partial x_n}(\alpha_1,\cdots,\alpha_n) \\
t^{l_j-w_1}\frac{\partial f_j}{\partial x_1} (\alpha_1,\cdots,\alpha_n)  & \cdots &  t^{l_j-w_n}\frac{\partial f_j}{\partial x_n}(\alpha_1,\cdots,\alpha_n) \\ \end{array} \right|.
\end{align*}
We denote $\frac{\partial h_i}{\partial x_k}(\alpha_1,\cdots,\alpha_n)=\gamma_{i,k}$ with $i=1,\cdots,n-1$, $k=1,\cdots,n$ and $\frac{\partial f_j}{\partial x_k} (\alpha_1,\cdots,\alpha_n)=\zeta_{j,k}$ with $j=1,2$ and $k=1,\cdots,n$.

Furthermore, since $h_1, \cdots, h_{n-1}, f_1, f_2$ are weighted homogeneous with weights $w_1, \cdots, w_n$, they satisfies:
$$d_ih_i=w_1 x_1\frac{\partial h_i}{\partial x_1}+\cdots+w_n x_n\frac{\partial h_i}{\partial x_n},$$
$$l_jf_j=w_1 x_1\frac{\partial f_j}{\partial x_1}+\cdots+w_n x_n\frac{\partial f_j}{\partial x_n},$$
with $i=1,\cdots,n-1$ and $j=1,2$. Thus,
$$w_1 \alpha_1t^{w_1}t^{d_i-w_1}\gamma_{i,1}+\cdots+w_n \alpha_n t^{w_n} t^{d_i-w_n}\gamma_{i,n}=0,$$
$$w_1 \alpha_1t^{w_1}t^{l_j-w_1}\zeta_{j,1}+\cdots+w_n \alpha_n t^{w_n} t^{l_j-w_n}\zeta_{j,n}=l_j\beta_j t^{l_j},$$
with $i=1,\cdots,n-1$ and $j=1,2$. 

Rewriting in matrix form
$$\left( \begin{array}{ccc}
t^{d_1-w_1}\gamma_{1,1} &\cdots & t^{d_1-w_n}\gamma_{1,n}\\
\vdots                                         & \ddots & \vdots                                                 \\
t^{d_{n-1}-w_1}\gamma_{n-1,1} &\cdots & t^{d_{n-1}-w_n}\gamma_{n-1,n}\\
t^{l_j-w_1}\zeta_{j,1} & \cdots &  t^{l_j-w_n}\zeta_{j,n} \\ \end{array} \right)\left( \begin{array}{c}
 w_1 \alpha_1t^{w_1}\\
\vdots                  \\
w_n \alpha_n t^{w_n} \\ \end{array} \right)= \left( \begin{array}{c}
 0\\
\vdots                  \\
0\\
l_j\beta_j t^{l_j} \\
 \end{array} \right).
$$

By Cramer's Rule,
$$w_1 \alpha_1 t^{w_1} J_j= \left| \begin{array}{cccc}
 0 & t^{d_1-w_2}\gamma_{1,2} &\cdots & t^{d_1-w_n}\gamma_{1,n}\\
\vdots   & \vdots                                      & \ddots & \vdots                                                 \\
0& t^{d_{n-1}-w_2}\gamma_{n-1,2} &\cdots & t^{d_{n-1}-w_n}\gamma_{n-1,n}\\
l_j\beta_j t^{l_j}& t^{l_j-w_2}\zeta_{j,2} & \cdots &  t^{l_j-w_n}\zeta_{j,n} \\ \end{array} \right|.
$$

Thus,
\begin{align*}
J_j&=(-1)^{1+n} \frac{l_j\beta_j}{w_1\alpha_1} t^{l_j-w_1}\left| \begin{array}{ccc}
  t^{d_1-w_2}\gamma_{1,2} &\cdots & t^{d_1-w_n}\gamma_{1,n}\\
\vdots                                      & \ddots & \vdots                                                 \\
t^{d_{n-1}-w_2}\gamma_{n-1,2} &\cdots & t^{d_{n-1}-w_n}\gamma_{n-1,n}\\  \end{array} \right| \\
&= l_j\beta_j t^{l_j - w_1 + d_1+ \cdots + d_{n-1}- w_2 - \cdots - w_n} A, \\
\end{align*}
with $A\neq 0$.

We obtain,
$$
J_1 = l_1 \beta_1 A  t^{l_1+\mu_X-1} \quad \mbox{and} \quad J_2 = l_2 \beta_2 A t^{l_2+\mu_X-1}
$$

Therefore,
\begin{align*}
\frac{\partial g}{\partial u} \circ f  &=  l_2 \beta_{2}^{l_1} \beta_1^{l_2-1}t^{l_1(l_2-1)}= \frac{ \beta_2^{l_1-1}\beta_1^{l_2-1}}{A} t^{\mu_Y - \mu_X} l_2 \beta_2 A  t^{l_2+\mu_X-1}\\
&= C \lambda_f J_2
\end{align*}
and
\begin{align*}
\frac{\partial g}{\partial v} \circ f & =  - l_1 \beta_{1}^{l_2} \beta_2^{l_1-1}t^{l_2(l_1-1)}=- \frac{ \beta_1^{l_2-1}\beta_2^{l_1-1}}{A} t^{\mu_Y - \mu_X} l_1 \beta_1 A  t^{l_1+\mu_X-1}\\
&= -C \lambda_f J_1.
\end{align*}
\end{proof}

\begin{lem} \label{nucleo} 
We consider the map 
$$\widetilde{ev}: \frac{\Theta(f)}{tf(\Theta_{X,0})} \longrightarrow J_g \O_{X,0},$$
given by $\widetilde{ev}([\xi])=\xi(g)$. Then, $\ker(ev) \cong \ker(\widetilde{ev})$.
\end{lem}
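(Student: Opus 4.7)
The plan is to reduce the lemma to a single injectivity check via the snake lemma. I would package both kernels into a commutative diagram with exact rows
$$
\begin{CD}
0 @>>> \ker(\widetilde{ev}) @>>> \Theta(f)/tf(\Theta_{X,0}) @>{\widetilde{ev}}>> J_g\O_{X,0} @>>> 0 \\
@. @VV{\alpha}V @VV{q}V @VV{\pi}V @. \\
0 @>>> \ker(ev) @>>> \Theta(f)/(tf(\Theta_{X,0})+\omega f(\Theta_2)) @>{ev}>> J_g\O_{X,0}/J_g\O_{Y,0} @>>> 0
\end{CD}
$$
with $q$ and $\pi$ the canonical projections and $\alpha$ the induced map on kernels. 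Both rows are short exact: $\widetilde{ev}$ is tautologically surjective (any $a(\partial g/\partial u)\circ f+b(\partial g/\partial v)\circ f$ is hit by $\xi=a\partial_u+b\partial_v$), and $ev$ is surjective by Lemma \ref{sobrejetora}. Applying the snake lemma produces the exact sequence
$$
0\longrightarrow \ker(\alpha)\longrightarrow \frac{\omega f(\Theta_2)}{\omega f(\Theta_2)\cap tf(\Theta_{X,0})}\longrightarrow J_g\O_{Y,0}\longrightarrow \coker(\alpha)\longrightarrow 0,
$$
the middle arrow being $[\omega f(\eta)]\mapsto\omega f(\eta)(g)$. Surjectivity of this arrow is exactly the computation at the end of Lemma \ref{sobrejetora}, so it suffices to prove that it is also injective.

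That injectivity amounts to: if $\eta\in\Theta_2$ with $\omega f(\eta)(g)=0$, then $\omega f(\eta)\in tf(\Theta_{X,0})$. Writing $\omega f(\eta)=(a,b)$ with $a,b\in\O_{Y,0}$, Lemma \ref{lemmalambda} turns the hypothesis into $aJ_2=bJ_1$, equivalently $l_2\beta_2\,a\,t^{l_2}=l_1\beta_1\,b\,t^{l_1}$ in $\O_{X,0}$. A direct inspection of the images of the generators of $\Theta_{X,0}$ under $tf$ (the fields $h_i\partial/\partial x_j$ vanish on $X$, the Euler field gives $(l_1f_1,l_2f_2)$, and $\mathcal H$ gives $(J_1,J_2)=At^{\mu_X-1}(l_1f_1,l_2f_2)$) shows
$$
tf(\Theta_{X,0})=(\O_{X,0}+t^{\mu_X-1}\O_{X,0})\cdot(l_1f_1,l_2f_2).
$$
Writing $(a,b)=\lambda\,(l_1f_1,l_2f_2)$ with $\lambda=a/(l_1\beta_1 t^{l_1})$ in the fraction field of $\O_{X,0}$, the question becomes whether $\lambda\in\O_{X,0}+t^{\mu_X-1}\O_{X,0}$, equivalently whether $\operatorname{supp}(a)\subset l_1+\Gamma_X^*$, where $\Gamma_X^*:=\Gamma_X\cup\{\mu_X-1\}$.

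The main obstacle will be this last semigroup verification. Assuming WLOG $l_1<l_2$, the condition $b\in\O_{Y,0}$ combined with $b=(l_2\beta_2/l_1\beta_1)\,a\,t^{l_2-l_1}$ forces $\operatorname{supp}(a)\subset\Gamma_Y\cap(\Gamma_Y-(l_2-l_1))$. For any $m$ in this intersection I would write $m=\alpha l_1+\beta l_2$ with $\alpha,\beta\geq 0$: if $\alpha\geq 1$, then $m-l_1=(\alpha-1)l_1+\beta l_2\in\Gamma_Y\subset\Gamma_X$; if $\alpha=0$, then $m+l_2-l_1\in\Gamma_Y$ combined with $\gcd(l_1,l_2)=1$ forces $\beta\geq l_1-1$, so $m-l_1\geq(l_1-1)l_2-l_1=\mu_Y-1\geq\mu_X-1$, the last inequality coming from $\O_{Y,0}\subset\O_{X,0}$ (which forces $\delta_Y\geq\delta_X$ and hence $\mu_Y\geq\mu_X$ by Milnor's formula). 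In both cases $m-l_1\in\Gamma_X^*$, completing the injectivity and therefore the lemma.
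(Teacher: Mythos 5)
Your proof is correct, and although its skeleton matches the paper's, the decisive step is handled by a genuinely different argument. Both proofs rest on the same two facts: (a) every element of $J_g\O_{Y,0}$ equals $\omega f(\eta)(g)$ for some $\eta\in\Theta_2$ (your surjectivity of the middle snake-lemma arrow; the paper's ``claim'' that each class in $\ker(ev)$ admits a representative annihilating $g$), and (b) if $\omega f(\eta)(g)=0$ then $\omega f(\eta)\in tf(\Theta_{X,0})$ (your injectivity; the paper's well-definedness of the map $[\xi]\mapsto[\xi]$). Your snake-lemma packaging is a tidier way of organizing the paper's explicit construction of the isomorphism, but the substantive divergence is in (b). The paper argues through the target: from $(a\frac{\partial g}{\partial u}+b\frac{\partial g}{\partial v})\circ f=0$ and injectivity of $f^*$ on $\O_{Y,0}$ it gets $a\frac{\partial g}{\partial u}+b\frac{\partial g}{\partial v}=\kappa g$ in $\O_2$, uses the Euler relation for $g$ to reduce to the Koszul syzygy of $(\frac{\partial g}{\partial u},\frac{\partial g}{\partial v})$, and then applies Lemma \ref{lemmalambda} to recognize $(-\frac{\partial g}{\partial v}\circ f,\frac{\partial g}{\partial u}\circ f)=C\lambda_f(J_1,J_2)$ as a multiple of $df\circ\mathcal H$. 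You argue through the source: Wahl's generators identify $tf(\Theta_{X,0})$ with $(\O_{X,0}+t^{\mu_X-1}\O_{X,0})\cdot(l_1\beta_1 t^{l_1},l_2\beta_2 t^{l_2})$, and membership becomes a numerical-semigroup verification on $\Gamma_Y\cap(\Gamma_Y-k)$, which your two-case analysis ($m-l_1\in\Gamma_Y$, or $m=\beta l_2$ forcing $\beta\geq l_1-1$ and hence $m-l_1\geq\mu_Y-1\geq\mu_X-1$) settles correctly. Your route is more computational but fully explicit at the level of monomials, and it quietly sidesteps a small imprecision in the paper's syzygy step (the paper writes $(l_1l_2a-\kappa l_1u,\,l_1l_2b-\kappa l_2v)=(-\frac{\partial g}{\partial v},\frac{\partial g}{\partial u})$ where a common factor $c\in\O_2$ times the Koszul relation is what the regular sequence actually gives); the paper's route is shorter and does not need $\gcd(l_1,l_2)=1$ or the bound $\mu_Y\geq\mu_X$ explicitly. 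If you write this up, just note the degenerate case $l_1=1$ and justify that $\O_{X,0}+t^{\mu_X-1}\O_{X,0}$ is precisely the set of series supported on $\Gamma_X\cup\{\mu_X-1\}$; both are immediate.
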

\begin{proof}
We observe that the map $\widetilde{ev}$ is well-defined and it is surjective. 

We define the morphism $\ker(ev)  \to \ker(\widetilde{ev})$ given by $\left[\xi\right] \mapsto [\xi]$, where the classes have to be considered in the respective quotients.

We claim that if $[\xi] \in \ker(ev)$, there exists representative $\xi$ such that
$$ \xi(g) = a \frac{\partial g}{\partial u} \circ f + b \frac{\partial g}{\partial v} \circ f = 0.$$
In fact, if $[\xi] \in \ker(ev)$, we choose a representative $\xi = a \frac{\partial}{\partial u} + b \frac{\partial}{\partial v}$, $a$, $b \in \mathcal{O}_{X,0}$, thus $\xi(g) = a \frac{\partial g}{\partial u} \circ f + b \frac{\partial g}{\partial v} \circ f$.
On the other hand, since $[\xi] \in \ker(ev)$, we have $\xi(g) \in J_{g}\mathcal{O}_{Y,0}$, so,
$$\xi(g) = \left( \tilde{a} \circ i \circ \bar{f} \right) \left( \left(\frac{\partial g}{\partial u} \circ i \right) \circ  \bar{f} \right) + \left( \tilde{b} \circ i \circ \bar{f} \right) \left( \left( \frac{\partial g}{\partial v}  \circ i \right) \circ \bar{f} \right),$$
with $\tilde{a} \circ i, \tilde{b} \circ i  \in \mathcal{O}_{Y,0}$. Therefore,
$$\left(  a - \tilde{a} \circ i \circ \bar{f} \right) \left( \frac{\partial g}{\partial u}  \circ  f \right) + \left(b - \tilde{b}  \circ i \circ \bar{f} \right)  \left( \frac{\partial g}{\partial v}   \circ f \right) = 0.$$
We define $\bar{a} = a - \tilde{a} \circ i \circ \bar{f}$, $\bar{b} = b - \tilde{b} \circ i \circ \bar{f}$ and $\bar{\xi} = \bar{a} \frac{\partial}{\partial u} + \bar{b} \frac{\partial}{\partial v}$, thus 
\begin{align*}
\xi - \bar{\xi}& =  a \frac{\partial}{\partial u} + b \frac{\partial}{\partial v} -  \left( \bar{a} \frac{\partial}{\partial u} + \bar{b} \frac{\partial}{\partial v} \right) = 
\left( \tilde{a} \circ i \circ \bar{f}\right) \frac{\partial}{\partial u} + \left(\tilde{b}  \circ i \circ \bar{f}\right) \frac{\partial }{\partial v}\\
&=  \left(\tilde{a}  \circ f\right) \frac{\partial}{\partial u} +\left( \tilde{b}   \circ f \right) \frac{\partial }{\partial v} =
 \tilde{\zeta} \circ f,
\end{align*}
where $\tilde{\zeta}=\tilde{a} \frac{\partial}{\partial u} + \tilde{b} \frac{\partial }{\partial v}$, with $\tilde{a}, \tilde{b} \in \mathcal{O}_{2}$.
Thus, $\xi - \bar{\xi} \in wf(\Theta_{2})$, i.e., $[\xi]=[\bar{\xi}]$ with $\bar{\xi}(g) = 0$.  This prove the claim.

Now, we will show that the morphism is well-defined. In fact, let $[\xi] = [\tilde{\xi}] \in \ker(ev)$, i.e., $\xi + T\A_{e} f = \tilde{\xi}+T\A_{e} f $, then $\xi - \tilde{\xi} \in T\A_{e} f$, with $\xi(g)=\tilde{\xi}(g)=0$.
We will show that $\xi - \tilde{\xi} \in tf(\Theta_X)$.

Since  $\xi - \tilde{\xi} \in T\A_{e} f$, then  $\xi - \tilde{\xi} = df \circ \eta + \rho \circ f$, $\eta \in \Theta_X$, $\rho \in \Theta_2$, with
$0 = \xi(g) - \tilde{\xi}(g) = df \circ \eta(g) + \rho \circ f(g)$, moreover $df \circ \eta(g) = 0$, thus $\rho \circ f(g)=0$.

We have $\rho =(a,b)$, with $a, b \in \mathcal{O}_{2}$, then, $\rho \circ f = (a \circ f,b \circ f)$ such that
$ (a \circ f) \frac{\partial g}{\partial u} \circ f + (b \circ f) \frac{\partial g}{\partial v}\circ f =0$. Thus,
$$\left( a \frac{\partial g}{\partial u} + b \frac{\partial g}{\partial v}\right)\circ f =0, \  \mbox{in}  \  \mathcal{O}_{Y}$$
i.e.,
$$ a \frac{\partial g}{\partial u} + b \frac{\partial g}{\partial v} = \kappa g, \ \kappa \in \mathcal{O}_2.$$
Since $g$ is weighted homogeneous,
$$ l_1 l_2 g = l_1 u \frac{\partial g}{\partial u}+l_2 v \frac{\partial g}{\partial v},$$
so,
$$l_1 l_2 \left(a \frac{\partial g}{\partial u} + b \frac{\partial g}{\partial v}\right) = l_1 l_2 \kappa  g = \kappa \left(l_1 u \frac{\partial g}{\partial u}+l_2 v \frac{\partial g}{\partial v}\right),$$

$$(l_1 l_2 a - \kappa l_1 u) \frac{\partial g}{\partial u} + (l_1 l_2 b - \kappa l_2 v) \frac{\partial g}{\partial v} = 0,$$
then
$$ l_1 l_2 a - \kappa l_1 u = - \frac{\partial g}{\partial v} \quad \mbox{and} \quad l_1 l_2 b - \kappa l_2 v = \frac{\partial g}{\partial u}.$$
 
Therefore,
$$(l_1 l_2 (a \circ f) - (\kappa \circ f)  l_1 \beta_1 f_1) = - \frac{\partial g}{\partial v} \circ f \quad \mbox{and} \quad  (l_1 l_2 (b\circ f) - (\kappa \circ f)  l_2 \beta_2 f_2)= \frac{\partial g}{\partial u} \circ f$$

Thus,
\begin{align*}
l_1 l_2 (\rho \circ f) &= (l_1 l_2 (a \circ f) ,l_1 l_2 (b \circ f))\\
&= (\kappa \circ f )(l_1 \beta_1 f_1,l_2 \beta_2 f_2) + \left(- \frac{\partial g}{\partial v} \circ f, \frac{\partial g}{\partial u} \circ f\right).
\end{align*}

We have
$$(\kappa \circ f) ( l_1 \beta_1 f_1, l_2 \beta_2 f_2) \in tf(\Theta_X),$$
we will show that 
$$\left(- \frac{\partial g}{\partial v} \circ f,\frac{\partial g}{\partial u} \circ f\right) \in tf(\Theta_X).$$

Since $ \frac{\partial g}{\partial u} \circ f  = C \lambda_f J_2$ and $\frac{\partial g}{\partial v} \circ f = - C  \lambda_f  J_1$, then
$$\left(- \frac{\partial g}{\partial v} \circ f,\frac{\partial g}{\partial u} \circ f\right) = C  \lambda_f  (J_1,J_2) \in tf(\Theta_X).$$

Therefore, $\rho \circ f \in tf(\Theta_X)$ and, consequently,  $$\xi - \tilde{\xi} = df \circ \eta + \rho \circ f \in tf(\Theta_X).$$

Since $t f(\Theta_X) \subseteq t f (\Theta_X) + \omega f(\Theta_2)$, the map is injective and it is surjective.
\end{proof}

By using the coordinates in source and target, we can identify $\Theta(f)$ with $\O_{X,0}^2=\O_{X,0}\oplus\O_{X,0}$. The module $\Theta_{X,0}$ can be seen as a submodule of $\O_{X,0}^n$. With these identifications, the morphism $tf:\Theta_{X,0}\to\Theta(f)$ is the restriction of the map $tf:\O_{X,0}^n\to\O_{X,0}^2$, whose matrix in the canonical basis is the Jacobian matrix of $f$. 

The following result describes the elements of $\ker(\widetilde{ev})$.

\begin{lem} \label{caracterizacao} Let $k = l_2-l_1$. Then,
$$
\ker(\widetilde{ev}) \cong \frac{ \C\left \{ (l_1 \beta_1 t^{r},l_2 \beta_2 t^{r + k}):\ (t^r, t^{r + k}) \in \mathcal{O}^2_{X,0}, r \in \mathbb{N}  \right \} }{ \left \langle (l_1 \beta_1 t^{l_1},l_2 \beta_2 t^{l_1+k}),(l_1 \beta_1 A t^{l_1+\mu_X-1},l_2 \beta_2 A t^{l_1+\mu_X-1+k})  \right \rangle }. 
$$
\end{lem}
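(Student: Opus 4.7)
The plan is to compute $\ker(\widetilde{ev})$ at the level of $\Theta(f)\cong\O_{X,0}^{2}$, reduce the defining equation to a single divisibility relation between $a$ and $b$, and then recognise the denominator as the image $tf(\Theta_{X,0})$ after noting that most of the generators of $\Theta_{X,0}$ push forward to zero on $X$.

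First I would fix a representative $\xi=(a,b)\in\O_{X,0}^{2}$ of a class in $\ker(\widetilde{ev})$, i.e.\ satisfying
$a\,\bigl(\tfrac{\partial g}{\partial u}\!\circ f\bigr)+b\,\bigl(\tfrac{\partial g}{\partial v}\!\circ f\bigr)=0$ in $\O_{X,0}$. Lemma \ref{lemmalambda} rewrites this as $C\lambda_f(a J_2-bJ_1)=0$, and since $\O_{X,0}\hookrightarrow\O_1=\C\{t\}$ is a subring of an integral domain and $C\lambda_f\neq 0$, we get $a J_2=bJ_1$. Substituting the explicit expressions $J_1=l_1\beta_1 A t^{l_1+\mu_X-1}$ and $J_2=l_2\beta_2 A t^{l_2+\mu_X-1}$ computed in the proof of Lemma \ref{lemmalambda} and cancelling the common factor $At^{l_1+\mu_X-1}$ inside $\O_1$, the defining equation collapses to $l_1\beta_1\, b=l_2\beta_2\, t^{k}\,a$ with $k=l_2-l_1$.

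Next I would read off the $\C$-vector space structure of this kernel. Writing $a=\sum a_r t^r$ with only exponents $r\in\Gamma_X\cup\{0\}$ permitted, the relation forces $b=\tfrac{l_2\beta_2}{l_1\beta_1}\sum a_r t^{r+k}$, and the requirement $b\in\O_{X,0}$ imposes $r+k\in\Gamma_X\cup\{0\}$ whenever $a_r\neq 0$. Rescaling each monomial so that the pair $(l_1\beta_1 t^r, l_2\beta_2 t^{r+k})$ becomes the distinguished basis element, the numerator is exactly
$\C\bigl\{(l_1\beta_1 t^{r},l_2\beta_2 t^{r+k}):(t^r,t^{r+k})\in\O_{X,0}^2,\ r\in\N\bigr\}.$

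Finally, since the proof of Lemma \ref{sobrejetora} shows $tf(\Theta_{X,0})\subseteq\ker(\widetilde{ev})$, quotienting the module described above by $tf(\Theta_{X,0})$ gives $\ker(\widetilde{ev})$. Here I would invoke Wahl's description of $\Theta_{X,0}$ recalled in Section~3: the generators $h_i\,\partial/\partial x_j$ satisfy $tf(h_i\partial/\partial x_j)=h_i\,\partial f/\partial x_j\equiv 0$ in $\O_{X,0}^{2}$, so the only surviving generators are $df\circ\epsilon$ and $df\circ\mathcal H$. Evaluating them along $\alpha$ via the computations of Section~3 yields $(l_1\beta_1 t^{l_1},l_2\beta_2 t^{l_1+k})$ and $\pm(l_1\beta_1 At^{l_1+\mu_X-1},l_2\beta_2 At^{l_1+\mu_X-1+k})$, which are precisely the two generators of the denominator in the statement. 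The step I expect to be slightly delicate is the verification that the $\C$-span above really describes \emph{all} solutions in $\O_{X,0}^{2}$ (and not some larger set in $\O_1^{2}$): this requires being careful that the admissibility conditions $t^r,t^{r+k}\in\O_{X,0}$ are imposed monomial-by-monomial, which works because $\O_{X,0}$ is a graded subring of $\C\{t\}$ with monomial basis $\{t^r:r\in\Gamma_X\cup\{0\}\}$.
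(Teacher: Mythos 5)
Your proposal is correct and follows essentially the same route as the paper: both reduce the kernel condition to the exponent-matching relation that forces $(a,b)$ to be an $\O_{X,0}$-combination of monomial pairs $(l_1\beta_1 t^r, l_2\beta_2 t^{r+k})$ with $(t^r,t^{r+k})\in\O_{X,0}^2$, and both identify the denominator as $tf(\Theta_{X,0})$ generated (modulo the vanishing $h_i\,\partial/\partial x_j$ terms) by $df\circ\epsilon$ and $df\circ\mathcal H$. Your detour through the $C\lambda_f J_i$ form of Lemma \ref{lemmalambda} and your explicit monomial-by-monomial admissibility check are only presentational variants of the paper's direct computation of $\widetilde{ev}([(a,b)])$.
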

\begin{proof}
For each $(a,b)\in \O_{X,0}^2$, we have:
$$\widetilde{ev}([(a,b)])=a \frac{\partial g}{\partial u}\circ f+b \frac{\partial g}{\partial v}\circ f=a l_2 \beta_2^{l_1}\beta_1^{l_2-1} t^{l_1 l_2 - l_1} - b l_1 \beta_1^{l_2}\beta_2^{l_1 -1} t^{l_1 l_2 - l_2}.
$$
Then, $[(a,b)]\in \ker(\widetilde{ev})$ if and only if $(a,b)=c(l_1 \beta_1 t^{s_1},l_2 \beta_2 t^{s_2} )$ such that $l_1 l_2-l_1+s_1=l_1 l_2 - l_2 +s_2$, for some $c\in\O_{X,0}$. It follows that $\ker(\widetilde{ev})$ is generated by monomial pairs of the form
$(l_1 \beta_1 t^r,l_2 \beta_2 t^{r+k})$, for some $r\in\N$ such that $(t^r,t^{r+k})\in\O^2_{X,0}$.

We remember that $tf(\Theta_X)$ is generated on $\mathcal{O}_{X,0}$ by vectors fields
$$\left(\frac{\partial f_1}{\partial {x}_1} h_1, \frac{\partial f_2}{\partial {x}_1}h_1\right),\cdots,\left(\frac{\partial f_1}{\partial {x}_n}h_1,\frac{\partial f_2}{\partial {x}_n}h_1\right),\cdots,$$
$$\left(\frac{\partial f_1}{\partial {x}_1}h_{n-1},\frac{\partial f_2}{\partial {x}_1}h_{n-1}\right),\cdots,\left(\frac{\partial f_1}{\partial {x}_n}h_{n-1},\frac{\partial f_2}{\partial {x}_n}h_{n-1}\right),$$
$$ df \circ \epsilon = (l_1 \beta_1 f_1,l_2 \beta_2 f_2) \quad \mbox{and} \quad df\circ\mathcal H= (-1)^{n+1}( J_1, J_2).$$

Since $\O_{X,0}=\C\{t^{w_1},\cdots,t^{w_n}\}$ and $\O_{Y,0}=\C\{t^{l_1},t^{l_2}\}\subset \O_{X,0}$, we have 
$$(l_1 \beta_1 f_1,l_2 \beta_2 f_2)=(l_1 \beta_1 t^{l_1},l_2 \beta_2 t^{l_1 +k}).$$

We remember that
$$
J_1 = l_1 \beta_1 A  t^{l_1+\mu_X-1} \quad \mbox{and} \quad J_2 = l_2 \beta_2 A t^{l_2+\mu_X-1}
$$

Thus, 
$$df\circ\mathcal H = (-1)^{n+1}(  l_1 \beta_1 A  t^{l_1+\mu_X-1}, l_2 \beta_2 A t^{l_1+\mu_X-1+k}).$$
\end{proof}

\begin{teo}\label{dimensao}
We denote by $s$ the number of elements of the form $(t^{r},t^{r+k})$ in $\mathcal{O}^2_{X,0}$ such that $0\leq r<\mu_X$.
Then, 
$$
\mbox{dim}_{\C} \ker(ev)= l_1 - \delta_X + s -1
$$
and
$$
\mbox{dim}_{\C} \frac{\mathcal{O}_{X,0}}{\langle J_1,J_2 \rangle} =l_{1} + \delta_X + s - 1
$$
\end{teo}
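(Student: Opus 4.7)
The plan is to produce explicit monomial $\C$-bases for both quotients and count them via the semigroup $\Gamma_X$. Set $a := l_1 + \mu_X - 1$; recall $k = l_2 - l_1$. By Lemma \ref{lemmalambda}, $\langle J_1, J_2 \rangle$ coincides up to units in $\O_{X,0}$ with $\langle t^a, t^{a+k} \rangle$, while Lemma \ref{caracterizacao}, followed by projection onto the first coordinate, yields an $\O_{X,0}$-linear isomorphism $\ker(\widetilde{ev}) \cong L/\langle t^{l_1}, t^a \rangle$, where $L$ is the $\O_{X,0}$-submodule of $\O_{X,0}$ with $\C$-basis $\{t^r : r \in S\}$ and $S := \{r \in \N : r, r+k \in \Gamma_X\}$. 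I may assume, after possibly exchanging $f_1$ and $f_2$, that $l_1 \leq l_2$, so $k \geq 0$. All counting will crucially use that $(X,0)$ is an irreducible ICIS, so $\Gamma_X$ is symmetric: setting $A := \Gamma_X \cap \{0,\dots,c-1\}$ and $T := \N \setminus \Gamma_X$, the map $r \mapsto c - 1 - r$ bijects $A$ with $T$, giving $|A| = |T| = \delta_X$ and $\mu_X = c = 2\delta_X$.

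For the second formula, the quotient $\O_{X,0}/\langle t^a, t^{a+k} \rangle$ has $\C$-basis $\{t^m : m \in \Gamma_X,\ m \notin (a+\Gamma_X) \cup (a+k+\Gamma_X)\}$. I split by $m < a$ versus $m \geq a$. For $m < a$ neither exclusion can hold (since $k \geq 0$), so the contribution is $|\Gamma_X \cap \{0,\dots,a-1\}| = a - \delta_X = l_1 + \delta_X - 1$ by $c = 2\delta_X$. For $m \geq a$, the substitution $q := m - a$ turns the conditions into $q \in T$ together with $q - k \notin \Gamma_X$, i.e.\ either $q \in T \cap \{0,\dots,k-1\}$ (so $q - k < 0$) or $q \in T$ with $q \geq k$ and $q - k \in T$. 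Under the symmetry $A \leftrightarrow T$, the first of these sets is in bijection with $\{r \in A : r + k \geq c\}$ and the second with $\{r \in A : r + k \in A\}$. Their disjoint union is $\{r \in A : r + k \in \Gamma_X\}$, of cardinality $s$. Summing the two contributions yields $l_1 + \delta_X + s - 1$.

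For the first formula, $\dim_\C L/\langle t^{l_1}, t^a \rangle$ is the number of $r \in S$ with $r \notin (l_1 + \Gamma_X) \cup (a + \Gamma_X)$. Since $r \geq l_1 + c$ forces $r - l_1 \geq c$ and hence $r \in l_1 + \Gamma_X$, and since $r = a$ clearly lies in $a + \Gamma_X$, all admissible $r$ satisfy $r \leq l_1 + c - 2 = a - 1$; in this range $r \notin a + \Gamma_X$ automatically. So the dimension equals $N_1 - N_2$, with $N_1 := |S \cap \{0,\dots,l_1 + c - 2\}|$ and $N_2 := |\{r \in S : l_1 \leq r \leq l_1 + c - 2,\ r - l_1 \in \Gamma_X\}|$. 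Splitting $N_1$ at $c$: $|S \cap \{0,\dots,c-1\}| = s$ by definition, and $|S \cap \{c,\dots,l_1 + c - 2\}| = l_1 - 1$ since $r \geq c$ together with $k \geq 0$ forces $r, r + k \in \Gamma_X$. Thus $N_1 = s + l_1 - 1$. For $N_2$, writing $r = l_1 + m$ with $m \in \Gamma_X \cap \{0,\dots,c-2\} = A$ (note $c - 1 \in T$), both $l_1 + m$ and $l_2 + m$ lie in $\Gamma_X$ automatically, so $N_2 = |A| = \delta_X$. Therefore $\dim_\C \ker(ev) = (s + l_1 - 1) - \delta_X = l_1 - \delta_X + s - 1$.

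The main technical obstacle is the identification of the $m \geq a$ count with $s$ in the second formula; this hinges on the symmetry $A \leftrightarrow T$ of $\Gamma_X$ coming from the Gorenstein property of an irreducible ICIS. Without it, the count would not collapse to $s$, and the equality would genuinely fail. Everything else is a matter of careful semigroup bookkeeping.
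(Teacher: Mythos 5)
Your proof is correct, and its overall architecture coincides with the paper's: both reduce the two dimensions to counting exponents in the numerical semigroup $\Gamma_X$ via Lemmas \ref{lemmalambda}, \ref{nucleo} and \ref{caracterizacao}, and your computation of $\dim_\C\ker(ev)$ (split at the conductor, $N_1=s+l_1-1$, subtract $\delta_X$ excluded elements of the form $l_1+m$ with $m\in\Gamma_X$) is essentially identical to the paper's count of $\Gamma_k\setminus\Gamma_\Theta$. Where you genuinely diverge is in the second formula: the paper computes $\lvert\Gamma_X\setminus(\Gamma_{J_1}\cup\Gamma_{J_2})\rvert$ by inclusion--exclusion on the two shifted copies of $\Gamma_X$ inside a bounded window, the key step being the direct identification $\lvert A\cap B\rvert=s$ (an element is common to both shifts exactly when it is $l_1+\mu_X-1+k+r$ with $r,r+k\in\Gamma_X$, $r<\mu_X$); you instead split at $a=l_1+\mu_X-1$ and convert the high-degree contribution into a count of gaps $q\in T$ with $q-k\notin\Gamma_X$, which you then match with $s$ via the symmetry bijection $q\mapsto c-1-q$. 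Your route makes the role of the Gorenstein symmetry of $\Gamma_X$ explicit in that step, whereas the paper's inclusion--exclusion only needs $\#(\N\setminus\Gamma_X)=\delta_X$ and $\mu_X=2\delta_X$ there; both are equally rigorous. Two small points: your reduction of $\ker(\widetilde{ev})$ to $L/\langle t^{l_1},t^a\rangle$ tacitly uses that projection onto the first coordinate is injective on the numerator module (true, since the second entry of every element is $\tfrac{l_2\beta_2}{l_1\beta_1}t^k$ times the first), and your normalisation $l_1\le l_2$, hence $k\ge 0$, is an assumption the paper also needs but leaves implicit (its claim that all pairs $(r,r+k)$ with $r\ge\mu_X$ lie in $\Gamma_X\oplus\Gamma_X$ fails for $k<0$); since the stated formula singles out $l_1$, it is worth saying explicitly that $l_1$ denotes the smaller of the two weighted degrees.
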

\begin{proof}
By Lemma \ref{nucleo} and  Lemma \ref{caracterizacao},
$$
\ker(ev) \cong \frac{ \C\left \{ (l_1 \beta_1 t^{r},l_2 \beta_2 t^{r + k}):\ (t^r, t^{r + k}) \in \mathcal{O}^2_{X,0}, r \in \mathbb{N}  \right \} }{ \left \langle (l_1 \beta_1 t^{l_1},l_2 \beta_2 t^{l_1+k}),(l_1 \beta_1 A t^{l_1+\mu_X-1},l_2 \beta_2 A t^{l_1+\mu_X-1+k})  \right \rangle }. 
$$

We consider the set $\Gamma= \left\{ l_1+ i: \ i\in\Gamma_X\right\}\cup\left\{l_1+\mu_X-1\right\}$ which is a subset of  $\Gamma_X$.
We denoted by $\Gamma_X \oplus \Gamma_X$ the associated semigroup to $\O^{2}_{X,0}$ e we take the sets:
$$
\Gamma_{k}=\left \{ (r,r+k)\in \Gamma_ X \oplus \Gamma_X \right\} \quad \mbox{and} \quad \Gamma_{\Theta}=\left\{ (l,l+k):\ l \in \Gamma \right\} \subset \Gamma_{k}
$$

We can identify the elements of $\ker(ev)$ with the elements of $\Gamma_{k} \setminus \Gamma_{\Theta}$.
Therefore, we compute the number of elements in $\Gamma_{k} \setminus \Gamma_{\Theta}$.

We observe that $\Gamma_{k}$ is equal to
\begin{align*}
\underbrace{\left\{(r,r+k):\ 0 \leq r < \mu_X \right\}}_{s \ \text{elements}} \cup \underbrace{\left\{(\mu_X,\mu_X+k),(\mu_X+1,\mu_X+1+k), \cdots \right\}}_{\text{all elements}}
\end{align*}
and $\Gamma_{\Theta}$ is 
\begin{align*}
&\overbrace{\left\{ (l_1+p,l_1+p+k):\ 0 \leq p < \mu_X,\ p\in\Gamma_X \right\}}^{\delta_X \ \mbox{elements}}\\
 \cup & \underbrace{\left\{ (l_{1}+ \mu_X - 1,l_1+ \mu_X - 1 +k), (l_1 + \mu_X,l_1+\mu_X+k),\cdots\right\}}_{\text{all elements}}.
\end{align*}

Then, we describe $\Gamma_{k} \setminus \Gamma_{\Theta}$ as 
\begin{align*}
\overbrace{\left\{(r,r+k):\ 0 \leq r < \mu_X \right\}}^{s \ \text{elements}}\cup\overbrace{\left\{(\mu_X,\mu_X+k),\cdots, (l_1+\mu_X-2,l_1+\mu_X-2+k)\right\}}^{l_1-1 \ \text{elements}} \\
\end{align*}
\begin{align*}
\setminus \underbrace{\left\{ (l_1+p,l_1+p+k):\ 0 \leq p < \mu_X,\  p\in\Gamma_X \right\}}_{\delta_X \ \mbox{elements}}
\end{align*}

Therefore, $$\mbox{dim}_{\C} \ker(ev) = s +  l_1 - 1 - \delta_X = l_1 - \delta_X + s - 1.$$

Now, we compute the dimension of $\O_X / \langle J_1,J_2 \rangle$.
We consider the sets 
$$
\Gamma_{J_1}= \left\{ l_1+\mu_X-1 + i: \ i\in\Gamma_X\right\}
$$ 
and
$$
\Gamma_{J_2}= \left\{ l_1+\mu_X-1 + k+ j: \ j \in\Gamma_X\right\}
$$
which are subset of  $\Gamma_X$. 
We can identify the elements of $\O_X / \langle J_1,J_2\rangle$ with the elements of $\Gamma_{X} \setminus (\Gamma_{J_1} \cup \Gamma_{J_2})$.
Therefore, we compute the number of elements in $\Gamma_{X} \setminus (\Gamma_{J_1} \cup \Gamma_{J_2})$.

We observe that there exists common elements in $\Gamma_{J_1}$ and $\Gamma_{J_2}$, we need to know the elements.
We have that all the elements after $l_1 + \mu_X -1 + k+ \mu_X$ are common elements in $\Gamma_{J_1}$ and $\Gamma_{J_2}$. 
Then, we consider the subsets 
$$
A=\left \{ l_1 + \mu_X - 1 + i :\ 0\leq i \leq \mu_X + k, i \in \Gamma_X \right \}
$$ 
and 
$$
B=\left \{ l_1 + \mu_X -1 + k + j :\ 0\leq j < \mu_X , j \in \Gamma_X \right \}.
$$
Given $r$ and $r+k$ in $\Gamma_X$ such that $0 \leq r < \mu_X$, the elements common in $A$ and $B$ are the elements of form:
$$l_1 + \mu_X -1 +(r+k) = l_1 + \mu_X - 1 + k + r.$$ 
Since $s$ is the number of elements $r \in \Gamma_X$, $0 \leq r < \mu_X$, such that $r+k \in \Gamma_X$, then $A\cap B$ contained $s$ common elements.
We have that $A$ contained $\delta_X +  k + 1$ elements and $B$ contained $\delta_X$ elements, thus $A\cup B$ contained $\mu_X + k + 1 - s$ elements.

Thus, $\Gamma_{X} \setminus (\Gamma_{J_1} \cup \Gamma_{J_2})$ is equivalent to
\begin{align*} 
\overbrace{\left\{j \in \Gamma_X; 0\leq j < \mu_X\right\}}^{\delta_X \ \mbox{elements}}\cup\overbrace{\left\{\mu_X,\cdots,l_1+\mu_X + \mu_X + k - 1 \right\}}^{l_1 + \mu_X + k  \ \mbox{elements}}
\end{align*}
\begin{align*}
\setminus \underbrace{\left \{ l_1 + \mu_X - 1 + i :\ 0\leq i \leq \mu_X  \right \}\cup \left \{ l_1 + \mu_X -1+ k + j :\ 0\leq j < \mu_X \right \}}_{\mu_X + k + 1 - s  \ \mbox{elements}}.
\end{align*}
Therefore,
$$\mbox{dim}_{\C} \frac{\mathcal{O}_{X,0}}{\langle J_1,J_2 \rangle} = \delta_X + l_1 + \mu_X + k - (\mu_X+ k + 1 - s) = l_1 + \delta_X + s -1.$$
\end{proof}

\begin{cor}\label{corollarydimensions} Let $(X,0)\subset(\C^n,0)$ be an irreducible curve ICIS, weighted homogeneous and let $f:(X,0)\to(\C^2,0)$ be a finite map germ of degree 1 onto its image $(Y,0)$ and $f$ is consistent with $(X,0)$. Then,
$$ \A_{e}\mbox{-}\codim(f) =  \dim_{\C} \frac{\mathcal{O}_{X,0}}{\langle J_1,J_2 \rangle} -\mu_X + \dim_{\C} \frac{J_{g}\mathcal{O}_{X,0}}{J_{g}\mathcal{O}_{Y,0}}.$$
\end{cor}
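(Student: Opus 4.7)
The plan is to simply assemble the pieces already proved in this section. Lemma~\ref{sobrejetora} gives the basic decomposition
$$\A_e\text{-}\codim(f) = \dim_\C \ker(ev) + \dim_\C \frac{J_g \O_{X,0}}{J_g \O_{Y,0}},$$
so the whole task reduces to re-expressing $\dim_\C \ker(ev)$ in terms of $\dim_\C(\O_{X,0}/\langle J_1,J_2\rangle)$ and an invariant of $(X,0)$ alone.

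For this I would read off the two formulas from Theorem~\ref{dimensao}, namely
$$\dim_\C \ker(ev) = l_1 - \delta_X + s - 1, \qquad \dim_\C \frac{\O_{X,0}}{\langle J_1,J_2\rangle} = l_1 + \delta_X + s - 1,$$
and subtract. The auxiliary parameters $l_1$, $s$, and the constant $-1$ cancel identically, leaving
$$\dim_\C \frac{\O_{X,0}}{\langle J_1,J_2\rangle} - \dim_\C \ker(ev) = 2\delta_X.$$

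The last ingredient is the irreducibility of $(X,0)$, which via Milnor's formula (recorded in the paragraph preceding Lemma~\ref{sobrejetora}) gives $\mu_X = 2\delta_X$. Substituting $2\delta_X = \mu_X$ into the identity above and plugging back into the decomposition from Lemma~\ref{sobrejetora} yields
$$\A_e\text{-}\codim(f) = \dim_\C \frac{\O_{X,0}}{\langle J_1,J_2\rangle} - \mu_X + \dim_\C \frac{J_g \O_{X,0}}{J_g \O_{Y,0}},$$
as required.

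There is no real obstacle here, since all the serious work has already been done in Lemmas~\ref{sobrejetora}, \ref{lemmalambda}, \ref{nucleo}, \ref{caracterizacao} and Theorem~\ref{dimensao}. The only point that requires attention is that the cancellation of the extraneous data $l_1$ and $s$ is not accidental: it reflects the symmetry of the semigroup $\Gamma_X$, which is equivalent to the Gorenstein property of the ICIS $(X,0)$ and to Milnor's identity $\mu_X = 2\delta_X$ used in the last step. Without the irreducibility hypothesis this clean formula would not hold.
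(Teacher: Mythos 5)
Your proposal is correct and follows exactly the paper's own argument: combine Lemma~\ref{sobrejetora} with the two dimension formulas of Theorem~\ref{dimensao}, observe that the difference is $2\delta_X=\mu_X$, and substitute. The extra remark about the symmetry of $\Gamma_X$ is a nice gloss but not needed beyond the identity $\mu_X=2\delta_X$ already recorded in the paper.
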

\begin{proof} By Lemma \ref{sobrejetora},
$$ \A_{e}\mbox{-}\codim(f) =  \dim_{\C} \ker(ev) + \dim_{\C} \frac{J_{g}\mathcal{O}_{X,0}}{J_{g}\mathcal{O}_{Y,0}}.$$

By Theorem \ref{dimensao},
$$\dim_{\C} \frac{\mathcal{O}_{X,0}}{\langle J_1,J_2 \rangle} - \dim_{\C} \ker(ev)= l_{1} + \delta_X + s - 1- \left( l_1 - \delta_X + s -1 \right)  = \mu_X.$$

Therefore,
$$ \A_{e}\mbox{-}\codim(f) = \dim_{\C} \frac{\mathcal{O}_{X,0}}{\langle J_1,J_2 \rangle} -\mu_X  + \dim_{\C} \frac{J_{g}\mathcal{O}_{X,0}}{J_{g}\mathcal{O}_{Y,0}}.$$
\end{proof}

\section{The image Milnor number}

The first and second authors in \cite{DJ}, define the image Milnor number $\mu_I(f)$ of $f:(X,0)\to(\C^2,0)$, where $(X,0)$ is a plane curve and $f$ is $\A$-finite.  
In this work, naturally, we generalize the concept of the image Milnor number to the case that $(X,0)$ is a space curve ICIS.  
We consider $(X,0)\subset(\C^n,0)$ a curve ICIS and $f:(X,0)\to(\C^2,0)$ be a finite map germ of degree 1 onto its image $(Y,0)$.
We suppose $f$ has finite singularity type, it follows from Theorem \ref{estabilizacao} that $f$ always admits a stabilisation $F: (\mathcal{X},0) \to (\C \times \C^2,0)$, given by $F(s,x)=(s,f_s(x))$. This means that $f_s:X_s\to B_\epsilon$ is stable, for all $s\ne0$ small enough, where $B_\epsilon$ is a small enough ball centered at the origin in $\C^2$.  Thus the image $Y_s=f_s(X_s)$ has the homotopy type of a wedge of 1-spheres.

\begin{defi} The \emph{image Milnor number} $\mu_I(f)$ is 
the number of 1-spheres in $Y_s$.
\end{defi}

Analogous to case where $(X,0)$ is a plane curve \cite{DJ}, $\mu_I(f)$ is well-defined, that is, it is independent of the stabilisation and of the representative. The image Milnor number satisfies the equality.
$$\mu_{I}(f) = \frac{\mu(Y,0) + \mu(X,0)}{2}.$$

For the next results, we use the definition of the delta invariant of $f$, $\delta(f)$, which was introduced in \cite{NT} for maps  of degree 1 between curves.

\begin{defi}[\cite{NT}] Let $f:(X,0)\to(Y,0)$ be a holomorphic map of degree 1 between curves $(X,0)$ and $(Y,0)$. The \emph{delta invariant} of $f$ is
$$
\delta(f)=\dim_\C\frac{\O_{X,0}}{f^* \O_{Y,0}}.
$$
\end{defi}

This number also satisfies $\delta(Y,0)=\delta(X,0)+\delta(f)$ when $(X,0)$ and $(Y,0)$ are irreducible curves.
Therefore, when $(X,0)$ and $(Y,0)$ are irreducible curves  $\mu(Y,0)-\mu(X,0)=2\delta(f)$ (see \cite{NT}), thus
$$
\mu_I(f)=\mu(Y,0) - \delta(f)=\mu(X,0)+\delta(f).
$$

Now we consider again, $(X,0)\subset(\C^n,0)$ an irreducible curve ICIS and weighted homogeneous of type $(w_1,\cdots,w_n;d_1,\cdots,d_{n-1})$, and $f:(X,0)\to(\C^2,0)$ a finite map germ of degree 1 onto its image $(Y,0)$, such that $f$ is consistent with $(X,0)$. 

We take $\mathcal{C}$ the ideal generated by $\lambda_f$ in $\O_{X,0}$. Then we have the following result.

\begin{prop}\label{delta}
Let $(X,0)\subset(\C^n,0)$ be an irreducible curve ICIS and weighted homogeneous of type $(w_1,\cdots,w_n;d_1,\cdots,d_{n-1})$ and let $f:(X,0)\to(\C^2,0)$ be a finite map germ of degree 1 onto its image $(Y,0)$ and consistent with $(X,0)$.
Then, 
$$
\dim_{\C}\frac{\O_{X,0}}{\mathcal{C}} = 2\delta(f) \quad \mbox{and} \quad \dim_{\C}\frac{\O_{Y,0}}{\mathcal{C}} = \delta(f).
$$
Furthermore, we have that $\frac{\mathcal{O}_{X,0}}{\langle J_1,J_2 \rangle}$ is isomorphic to $\frac{\mathcal{C}}{J_g \mathcal{O}_{X,0}}$.
\end{prop}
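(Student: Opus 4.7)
The plan is to split the proposition into its three assertions and treat each with a short algebraic argument anchored in the semigroup of $(X,0)$ and the formulas of Lemma \ref{lemmalambda}.

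For $\dim_\C \O_{X,0}/\mathcal{C} = 2\delta(f)$, I would use that $\O_{X,0}$ is a one-dimensional domain with normalization $\C\{t\}$, so $\lambda_f = t^{2\delta(f)}$ is a non-zero-divisor. Computing $\dim_\C \C\{t\}/\mathcal{C}$ in two ways via the chains $\mathcal{C} \subset \O_{X,0} \subset \C\{t\}$ and $\mathcal{C} \subset \lambda_f\C\{t\} \subset \C\{t\}$, and using that $\lambda_f\C\{t\}/\mathcal{C} \cong \C\{t\}/\O_{X,0}$ (by dividing by $\lambda_f$) has dimension $\delta_X$ while $\dim_\C \C\{t\}/\lambda_f\C\{t\} = 2\delta(f)$, length additivity yields the formula.

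For $\dim_\C \O_{Y,0}/\mathcal{C} = \delta(f)$, the crucial step is the inclusion $\mathcal{C} \subset \O_{Y,0}$, equivalently $2\delta(f) + \Gamma_X \subset \Gamma_Y$. I would exploit that $\Gamma_X$ is symmetric (since $(X,0)$ is an ICIS) and $\Gamma_Y$ is symmetric (since $(Y,0)$ is a plane curve), with respective conductors $\mu_X$ and $\mu_Y$. Using $2\delta(f) = \mu_Y - \mu_X$, the target membership $n + 2\delta(f) \in \Gamma_Y$ for $n \in \Gamma_X$ is equivalent, by symmetry of $\Gamma_Y$, to $\mu_Y - 1 - n - 2\delta(f) = \mu_X - 1 - n \notin \Gamma_Y$; and since $\Gamma_Y \subset \Gamma_X$, this follows from the symmetry of $\Gamma_X$ applied to $n \in \Gamma_X$. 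Once $\mathcal{C} \subset \O_{Y,0}$ is in hand, length additivity in the tower $\mathcal{C} \subset \O_{Y,0} \subset \O_{X,0}$ together with $\delta(f) = \dim_\C \O_{X,0}/\O_{Y,0}$ gives $\dim_\C \O_{Y,0}/\mathcal{C} = 2\delta(f) - \delta(f) = \delta(f)$.

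For the isomorphism $\O_{X,0}/\langle J_1, J_2\rangle \cong \mathcal{C}/J_g\O_{X,0}$, Lemma \ref{lemmalambda} gives $J_g\O_{X,0} = \lambda_f\langle J_1, J_2\rangle$, and multiplication by the non-zero-divisor $\lambda_f$ is an $\O_{X,0}$-module isomorphism $\O_{X,0}\xrightarrow{\sim}\mathcal{C}$ taking $\langle J_1, J_2\rangle$ onto $J_g\O_{X,0}$; the stated isomorphism of quotients is immediate. The main obstacle is the containment $\mathcal{C}\subset\O_{Y,0}$: since $2\delta(f)$ may lie strictly below the conductor $\mu_Y$, it cannot be obtained from a purely numerical bound, and one really needs to pair the symmetries of $\Gamma_X$ and $\Gamma_Y$ through the identity $2\delta(f)=\mu_Y-\mu_X$. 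Everything else reduces to standard length additivity and the non-zero-divisor isomorphism induced by $\lambda_f$.
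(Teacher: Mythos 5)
Your proposal is correct and follows essentially the same route as the paper: the containment $\mathcal{C}\subset\O_{Y,0}$ obtained by pairing the symmetries of $\Gamma_X$ and $\Gamma_Y$ through $2\delta(f)=\mu_Y-\mu_X$, the length additivity in the tower $\mathcal{C}\subset\O_{Y,0}\subset\O_{X,0}$, and the isomorphism of quotients induced by multiplication by the non-zero-divisor $\lambda_f$ (using Lemma \ref{lemmalambda} to identify $\lambda_f\langle J_1,J_2\rangle$ with $J_g\O_{X,0}$ up to a unit) are exactly the paper's arguments. The only divergence is in computing $\dim_{\C}\O_{X,0}/\mathcal{C}$: you use length additivity through the normalization $\C\{t\}$, i.e.\ that a principal ideal generated by an element of order $m$ in a unibranch curve has colength $m$, whereas the paper invokes a Bezout-type formula for the weighted homogeneous complete intersection $\langle h_1,\dots,h_{n-1},\lambda_f\rangle$ together with $d_1\cdots d_{n-1}=w_1\cdots w_n$; both are valid and give $2\delta(f)$.
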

\begin{proof} 
By Lemma \ref{lemmalambda},
$$
\lambda_f=t^{\mu_Y-\mu_X}=t^{2\delta(f)}.
$$
Therefore, the weighted degree of $\lambda_f$ with the weights $w_1,\cdots,w_n$ is $2\delta(f)$. 
Denoted by $(h_1,\cdots,h_{n-1}):(\C^n,0)\to (\C^{n-1},0)$ the map that generates $(X,0)$,  by Bezout theorem, we obtain
\begin{align*}
\dim_{\C}\frac{\O_{X,0}}{\mathcal{C}} &= \dim_{\C}\frac{\O_{X,0}}{\langle \lambda_f \rangle} = \dim_{\C}\frac{\O_{2}}{\langle h_1,\cdots,h_{n-1}, \lambda_f \rangle}\\
&=\frac{d_1\cdots d_{n-1}2\delta(f)}{w_1 \cdots w_n}=2\delta(f).
\end{align*}

Therefore,
$$
\dim_{\C}\frac{\O_{X,0}}{\mathcal{C}} = 2\delta(f).
$$

To obtain the second equality, we first show that $\mathcal{C} \subset \O_{Y,0}$. We recall that $\mathcal{C}$ is the ideal generated by $\lambda_f=t^{2\delta(f)}$ in $\O_{X,0}$. 
Then, we need to show that $\sigma t^{2\delta(f)} \in \O_{Y,0}$, for all $\sigma \in \O_{X,0}$.
To see this, we consider the numerical semigroups $\Gamma_X$ and $\Gamma_Y$ associated the curves $(X,0)$ and $(Y,0)$ respectively. We recall that $\Gamma_X$ and $\Gamma_Y$ are symmetric. 

We show that if $2\delta(f) \in \Gamma_Y$, then $t^{2\delta(f)} \in \O_{Y,0}$. 
In fact,  since $\Gamma_Y$ is symmetric we have $2\delta(f) \in \Gamma_Y$ or $\mu_Y-1-2\delta(f) \in \Gamma_Y$.
We suppose $\mu_Y-1-2\delta(f) \in \Gamma_Y$, then
$$
\mu_Y-1-2\delta(f)= \mu_Y - 1 - \mu_Y + \mu_X = \mu_X-1 \in \Gamma_Y \subset \Gamma_X.
$$
But $\mu_X-1 \notin \Gamma_X$, because $\mu_X$ is the conductor of $\Gamma_X$. 
Therefore, $2\delta(f) \in \Gamma_Y$, thus $t^{2\delta(f)} \in \O_{Y,0}$.

Now, let $\sigma=t^{a} \in \O_{X,0}$ with $a \in \Gamma_X$, then $2\delta(f)+a \in \Gamma_Y$. 
In fact, we have $2\delta(f)+a \in \Gamma_Y$ or $\mu_Y-1-(2\delta(f)+a) \in \Gamma_Y$.
If $\mu_Y-1-(2\delta(f)+a) \in \Gamma_Y$, then
$$
\mu_Y-1-\mu_Y+\mu_X- a= \mu_X-1-a \in \Gamma_Y \subset \Gamma_X.
$$
Since $\Gamma_X$ is symmetric, we have $a \notin \Gamma_X$, but $\sigma=t^{a} \in \O_{X,0}$. 
Thus, $\mu_Y-1-(2\delta(f)+a) \notin \Gamma_Y$ and  consequently $2\delta(f)+a \in \Gamma_Y$.
Therefore, $\mathcal{C} \subset \O_{Y,0}$.
We obtain,
$$
\dim_{\C}\frac{\O_{Y,0}}{\mathcal{C}}=\dim_{\C}\frac{\O_{X,0}}{\mathcal{C}} - \dim_\C\frac{\O_{X,0}}{f^* \O_{Y,0}} = 2\delta(f) -\delta(f) = \delta(f).
$$

To complete the proof, we observe that multiplication by $\lambda_f$ gives an isomorphism $\phi:\O_{X,0}\to\mathcal C$. Moreover, $\phi(\langle J_1,J_2 \rangle)=J_g \mathcal{O}_{X,0}$ and hence, it induces an isomorphism
$$\frac{\mathcal{O}_{X,0}}{\langle J_1,J_2 \rangle} \longrightarrow \frac{\mathcal{C}}{J_g \mathcal{O}_{X,0}}.$$
\end{proof}

\begin{teo}\label{teoprincipal}
Let $(X,0)\subset(\C^n,0)$ be an irreducible weighted homogeneous curve ICIS and let $f:(X,0)\to(\C^2,0)$ be a finite map germ of degree 1 onto its image $(Y,0)$ and consistent with $(X,0)$.
Then, 
$$\A_{e}\mbox{-}\codim(X,f) = \mu_{I}(f).$$
Moreover, let $\alpha:(\C,0)\to(X,0)$ be a parametrisation of $(X,0)$. Then, 
$$
\mu_{I}(f)=\delta(X,0)+\mu_{I}(f\circ\alpha)
$$
and
$$
\A_e\mbox{-}\codim(f)=\A_e\mbox{-}\codim(f\circ\alpha)-\frac{1}{n-1}\A_e\mbox{-}\codim(\alpha)
$$
\end{teo}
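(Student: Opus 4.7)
The plan is to derive all three equalities by combining Theorem \ref{estabilizacao}, Corollary \ref{corollarydimensions}, Proposition \ref{delta}, and the formulas for $\mu_I$ in terms of the delta invariants established at the beginning of this section.

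For the first equality, Theorem \ref{estabilizacao} together with $\tau(X,0)=\mu_X$ (weighted homogeneity) reduces the claim to showing $\A_e\mbox{-}\codim(f)=\delta(f)$, since $\mu_I(f)=\mu_X+\delta(f)$. Plugging the isomorphism $\mathcal O_{X,0}/\langle J_1,J_2\rangle\cong\mathcal C/J_g\mathcal O_{X,0}$ from Proposition \ref{delta} into Corollary \ref{corollarydimensions} and telescoping the two remaining dimensions gives
$$\A_e\mbox{-}\codim(f)+\mu_X=\dim_\C\frac{\mathcal C}{J_g\mathcal O_{X,0}}+\dim_\C\frac{J_g\mathcal O_{X,0}}{J_g\mathcal O_{Y,0}}=\dim_\C\frac{\mathcal C}{J_g\mathcal O_{Y,0}}.$$
Since $\mathcal C\subset\mathcal O_{Y,0}$ with $\dim_\C(\mathcal O_{Y,0}/\mathcal C)=\delta(f)$, the right-hand side equals $\dim_\C(\mathcal O_{Y,0}/J_g\mathcal O_{Y,0})-\delta(f)=\tau(Y,0)-\delta(f)=\mu_Y-\delta(f)$ by weighted homogeneity of $g$. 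Using $\mu_Y=\mu_X+2\delta(f)$ then yields $\A_e\mbox{-}\codim(f)=\delta(f)$, and hence $\A_e\mbox{-}\codim(X,f)=\mu_X+\delta(f)=\mu_I(f)$.

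For the second equality, the composition $f\circ\alpha\colon(\C,0)\to(\C^2,0)$ parametrises the irreducible plane curve $(Y,0)$, so Mond's formula recalled in the introduction gives $\mu_I(f\circ\alpha)=\delta(Y,0)-r(Y,0)+1=\delta_Y$. Combining $\mu_X=2\delta_X$ and $\delta_Y=\delta_X+\delta(f)$ then produces $\delta(X,0)+\mu_I(f\circ\alpha)=\delta_X+\delta_Y=2\delta_X+\delta(f)=\mu_X+\delta(f)=\mu_I(f)$.

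For the third equality, $f\circ\alpha$ is weighted homogeneous and its image $(Y,0)$ is weighted homogeneous, so the equality case of Theorem \ref{Mond2} yields $\A_e\mbox{-}\codim(f\circ\alpha)=\mu_I(f\circ\alpha)=\delta_Y$. Together with $\A_e\mbox{-}\codim(f)=\delta_Y-\delta_X$ from step one, the identity reduces to $\A_e\mbox{-}\codim(\alpha)=(n-1)\delta_X$ for the weighted homogeneous monomial parametrisation $\alpha$, and I expect this to be the main obstacle. The approach is a direct semigroup computation of $\A_e\mbox{-}\codim(\alpha)=\dim_\C \mathcal O_1^n/(\mathcal O_{X,0}^n+\alpha'(t)\mathcal O_1)$: the quotient $(\mathcal O_1/\mathcal O_{X,0})^n$ has dimension $n\delta_X$, and one has to verify, by a bookkeeping analogous to that in the proof of Theorem \ref{dimensao} applied to $\alpha'(t)=(w_1\alpha_1 t^{w_1-1},\ldots,w_n\alpha_n t^{w_n-1})$, that the cyclic submodule generated by $\alpha'(t)$ contributes exactly $\delta_X$ independent classes, leaving $(n-1)\delta_X$. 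Assembling everything then gives $\A_e\mbox{-}\codim(f\circ\alpha)-\tfrac{1}{n-1}\A_e\mbox{-}\codim(\alpha)=\delta_Y-\delta_X=\A_e\mbox{-}\codim(f)$.
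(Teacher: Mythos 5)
Your argument coincides with the paper's proof in all essentials: the same telescoping of Corollary \ref{corollarydimensions} through the isomorphism of Proposition \ref{delta} and the exact sequence relating $J_g\mathcal{O}_{X,0}/J_g\mathcal{O}_{Y,0}$, $\mathcal{C}/J_g\mathcal{O}_{Y,0}$ and $\mathcal{C}/J_g\mathcal{O}_{X,0}$ gives $\A_e\mbox{-}\codim(f)=\mu_Y-\delta(f)-\mu_X=\delta(f)$, and the second and third identities are the same bookkeeping with $\mu_X=2\delta_X$, $\delta_Y=\delta_X+\delta(f)$ and Mond's theorem applied to the parametrisation $f\circ\alpha$ of $(Y,0)$. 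The single point of divergence is the identity $\A_e\mbox{-}\codim(\alpha)=(n-1)\delta(X,0)$: the paper simply quotes \cite{HHR}, whereas you propose to prove it by computing $\dim_\C\O_1^n/(\O_{X,0}^n+\alpha'(t)\O_1)$ directly, and you leave the key count --- that the cyclic module generated by $\alpha'(t)$ contributes exactly $\delta_X$ classes modulo $\O_{X,0}^n$ --- as an announced verification rather than a proof. That count does hold and can be closed quickly using the symmetry of $\Gamma_X$: since multiplication by $\alpha'(t)$ is injective, the contribution equals the number of $m\in\N$ with $m+w_i-1\notin\Gamma_X$ for some $i$; by symmetry ($z\in\Gamma_X$ iff $c-1-z\notin\Gamma_X$) and the fact that the $w_i$ generate $\Gamma_X$, this condition is equivalent to $c-m\in\Gamma_X\setminus\{0\}$, i.e.\ to $m-1\notin\Gamma_X$ and $m\neq c$, and the set of such $m$ has exactly $(1+\delta_X)-1=\delta_X$ elements. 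With that lemma supplied (or with the citation to \cite{HHR}, as in the paper), your proof is complete.
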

\begin{proof}
By Corollary \ref{corollarydimensions} and by Proposition \ref{delta}, we obtain
\begin{align*}
\A_{e}\mbox{-}\codim(f) &= \dim_{\C} \frac{\mathcal{O}_{X,0}}{\langle J_1,J_2 \rangle}  -  \mu_X + \dim_{\C} \frac{J_{g}\mathcal{O}_{X,0}}{J_{g}\mathcal{O}_{Y,0}}\\
&= \dim \frac{\mathcal{C}}{J_g \mathcal{O}_{X,0}} - \mu_X + \dim_{\C} \frac{J_{g}\mathcal{O}_{X,0}}{J_{g}\mathcal{O}_{Y,0}}.
\end{align*}

We have the exact sequence, 
$$ 0 \longrightarrow \frac{J_{g}\mathcal{O}_{X,0}}{J_{g}\mathcal{O}_{Y,0}} \longrightarrow \frac{\mathcal{C}}{J_g \mathcal{O}_{Y,0}}\longrightarrow \frac{\mathcal{C}}{J_g \mathcal{O}_{X,0}} \longrightarrow 0,$$
thus, by the exact sequence and by Proposition \ref{delta},
\begin{align*}
\A_{e}\mbox{-}\codim(f) &=\dim_{\C} \frac{\mathcal{C}}{J_g \mathcal{O}_{Y,0}} - \mu_X\\
&=\dim_{\C} \frac{\O_{Y,0}}{J_g \mathcal{O}_{Y,0}}  - \dim_{\C} \frac{\O_{Y,0}}{\mathcal{C}} - \mu_X \\
&= \mu_Y - \delta(f) - \mu_X = \delta(f).
\end{align*}

Since $(X,0)$ is weighted homogeneous, we have $\mu_X=\tau(X,0)$, hence
$$\A_{e}\mbox{-}\codim(X,f) = \A_{e}\mbox{-}\codim(f) + \mu_X =  \delta(f) + \mu_X = \mu_I(f).$$

To prove the second part, we observe that $f\circ\alpha:(\C,0)\to(Y,0)$ is a parametrisation of $(Y,0)\subset(\C^2,0)$, then by \cite{M2}, 
$$
\A_e\mbox{-}\codim(f\circ\alpha)=\mu_{I}(f\circ\alpha)=\delta(Y,0).
$$
By \cite{NT}, $\delta(Y,0)=\delta(X,0)+\delta(f)$, and by definition of image Milnor number, we obtain
\begin{align*}
\mu_I(f)&=\mu(X,0)+\delta(f)=\mu(X,0)+\delta(Y,0)-\delta(X,0)\\
&=\delta(X,0)+\delta(Y,0)=\delta(X,0)+\mu_I(f\circ\alpha).
\end{align*}

We also have,
\begin{align*}
\A_e\mbox{-}\codim(f)&= \delta(f)= \delta(Y,0) - \delta(X,0) \\
&=\A_e\mbox{-}\codim(f\circ\alpha)-\delta(X,0).
\end{align*}
It follows from \cite{HHR} that  $\A_e\mbox{-}\codim(\alpha)=(n-1) \delta(X,0)$, thus
\begin{align*}
\A_e\mbox{-}\codim(f)&= \A_e\mbox{-}\codim(f\circ\alpha)-\frac{1}{n-1}\A_e\mbox{-}\codim(\alpha).
\end{align*}
\end{proof}

We observe that, in particular, if $(X,0)$ is a plane curve, by \cite{M2}, $\mu_I(\alpha)=\delta(X,0)$ and we obtain 
$$
\mu_{I}(f)=\mu_{I}(\alpha)+\mu_{I}(f\circ\alpha).
$$

\begin{exam}\label{example1}
We consider $(X,0)$ a weighted homogeneous  curve defined by $h(x,y) = x^{2}-y^{3}$  and $f:(X,0) \to (\C^{2},0)$ a map defined by $f(x,y)=(x,y^2)$.
Then, $(Y,0)$ is weighted homogeneous plane curve defined by $g(u,v) = u^{4}-v^{3}$.

We have, $\mu(X,0) =2$, $\mu(Y,0)=6$, then 
$$\mu_{I}(f)=(\mu(Y,0) + \mu(X,0))/2 = (6 + 2)/2 = 4.$$
By \cite{MM}, 
$$ \A_{e}\mbox{-}\codim(X,f)=\dim_{\C} \frac{\Theta(\gamma)}{t\gamma(\Theta_{2})+\gamma^{*}(\Derlog D(G))},
$$
where $G:\C^{3} \to \C^4$ defined by $G(x,y,s)=(x,y^2,x^2-y^3+sy,s)$ is a stability of map $(h,\bar{f}): \C^2 \to \C^3$, given by $(h,\bar{f})(x,y)=(x^2-y^3,x,y^2)$.
We observe that, $D(G)=Im(G)$, because $G$ is no surjective, then $\Derlog D(G)=\Derlog \mbox{Im}(G)$.
Hence, to compute $ \A_{e}\mbox{-}\codim(X,f)$, we find $H: \C^4 \to \C$ such that $\mbox{Im}(G)=H^{-1}(0)$. We do this using the Singular software,
$$H(z,u,v,w)= z^2-v^3-2u^2z+2v^2w-vw^2+u^4.$$
We use the map $H$  to compute $\Derlog \mbox{Im}(G)$ and then we can calculate
$$\dim_{\C} \frac{\Theta(\gamma)}{t\gamma(\Theta_{2})+\gamma^{*}(\Derlog \mbox{Im}(G))}.$$ 
Therefore,
$$ \A_{e}\mbox{-}\codim(X,f)=\dim_{\C} \frac{\Theta(\gamma)}{t\gamma(\Theta_{2})+\gamma^{*}(\Derlog \mbox{Im}(G))} = 4 = \mu_{I}(f). $$
and
$$\A_{e}\mbox{-}\codim(f)= \A_{e}\mbox{-}\codim(X,f)-\mu(X,0)=4-2=2=\delta(f).$$

Now, we consider $\alpha:(\C,0)\to(X,0)$, given by $\alpha(t)=(t^3,t^2)$, the parametrisation of $(X,0)$. Then,
$\A_{e}\mbox{-}\codim(\alpha)=\mu_I(\alpha)=\delta(X,0)=1$ and $\A_{e}\mbox{-}\codim(f\circ\alpha)=\mu_I(f\circ\alpha)=\delta(Y,0)=3$.
Therefore,
$$
\mu_I(f)=\mu_I(\alpha)+\mu_I(f\circ\alpha).
$$
and
$$
\A_{e}\mbox{-}\codim(f)= \A_{e}\mbox{-}\codim(f\circ\alpha)- \A_{e}\mbox{-}\codim(\alpha).
$$
\end{exam}

\begin{exam} 
We consider $(X,0)\subset (\C^3,0)$ a weighted homogeneous curve defined by $h(x,y,z) = (x^{3}-y^{2},xy-z)$ with parametrization $\alpha:(\C,0)\to (\C^3,0)$ defined by $\alpha(t)=(t^2,t^3,t^5)$  and $f:(X,0) \to (\C^{2},0)$ a map defined by $f(x,y,z)=(x,y^2z)$, thus $f$ is consistent with $(X,0)$. Then, $(Y,0)$ is a weighted homogeneous plane curve defined by $g(u,v) = u^{11}-v^{2}$ with parametrization $\beta:(\C,0)\to (\C^2,0)$ defined by $\beta(t)=(t^2,t^{11})$. 

We use the Singular software to calculate the Milnor number, we have $\mu(X,0)=2$ and $\mu(Y,0)=10$.
Thus, $\mu_{I}(f)=(\mu(Y,0) + \mu(X,0)) / 2 = (10 + 2) / 2 = 6$.

By \cite{MM}, 
$$ \A_{e}\mbox{-}\codim(X,f)=\dim_{\C} \frac{\Theta(\gamma)}{t\gamma(\Theta_{2})+\gamma^{*}(\Derlog D(G))},
$$
where $G:\C^{4} \to \C^5$ defined by $G(x,y,z,s)=(x^3-y^2,xy-z,x,y^2z+sy,s)$ is a stability of map $(h,\bar{f}): \C^3 \to \C^4$, given by $(h,\bar{f})(x,y,z)=(x^3-y^2,xy-z,x,y^2z)$.
As in example \ref{example1},  we use the Singular software, thus,
$$ \A_{e}\mbox{-}\codim(X,f)=\dim_{\C} \frac{\Theta(\gamma)}{t\gamma(\Theta_{2})+\gamma^{*}(\Derlog \mbox{Im}(G))} = 6=\mu_{I}(f).$$
\end{exam}

\begin{exam}
We consider $(X,0)\subset (\C^3,0)$ a weighted homogeneous curve defined by $h(x,y,z) = (x^{5}-y^{2},xy-z)$ with parametrization $\alpha:(\C,0)\to (\C^3,0)$ defined by $\alpha(t)=(t^2,t^5,t^7)$  and $f:(X,0) \to (\C^{2},0)$ a map defined by $f(x,y,z)=(x^2,y+z)$. We observe that $f$ is not consistent with $(X,0)$. Then, $(Y,0)$ is a plane curve defined by $g(u,v) = u^{7}-2u^6 +4u^3 v^2 + u^5-v^{4}$ with parametrization $\beta:(\C,0)\to (\C^2,0)$ defined by $\beta(t)=(t^4,t^5 + t^7)$. 

We use the Singular software to calculate the Milnor number and the Tjurina number, we have $\mu(X,0)=4$, $\tau(X,0)=4$, $\mu(Y,0)=12$ and $\tau(Y,0)=11$.
Thus, $(Y,0)$ is not weighted homogeneous. We also have, $\mu_{I}(f)=(\mu(Y,0) + \mu(X,0)) / 2 = (12 + 4) / 2 = 8$.

By \cite{MM}, 
$$ \A_{e}\mbox{-}\codim(X,f)=\dim_{\C} \frac{\Theta(\gamma)}{t\gamma(\Theta_{2})+\gamma^{*}(\Derlog D(G))},
$$
where $G:\C^{4} \to \C^5$ defined by $G(x,y,z,s)=(x^5-y^2+sx,xy-z,x^2,y+z,s)$ is a stability of map $(h,\bar{f}): \C^3 \to \C^4$, given by $(h,\bar{f})(x,y,z)=(x^5-y^2,xy-z,x^2,y+z)$.
As in example \ref{example1},  we use the Singular software, thus,
$$ \A_{e}\mbox{-}\codim(X,f)=\dim_{\C} \frac{\Theta(\gamma)}{t\gamma(\Theta_{2})+\gamma^{*}(\Derlog \mbox{Im}(G))} = 7.$$
Therefore,
$$ \A_{e}\mbox{-}\codim(X,f)<\mu_{I}(f).$$
We observe that the hypotheses $f$ consistent with of $(X,0)$ is necessary to obtain an equality.
\end{exam}

\bibliographystyle{amsplain}
\bibliography{refbiblio}

\providecommand{\bysame}{\leavevmode\hbox to3em{\hrulefill}\thinspace}
\providecommand{\MR}{\relax\ifhmode\unskip\space\fi MR }
\providecommand{\MRhref}[2]{%
  \href{http://www.ams.org/mathscinet-getitem?mr=#1}{#2}
}
\providecommand{\href}[2]{#2}
\begin{thebibliography}{1}

\bibitem{DJ}
D.~A.~H. Ament and J.~J. Nu\~no Ballesteros, \emph{Mond's conjecture for maps
  between curves}, Mathematische Nachrichten.

\bibitem{HeHer}
A.~Hefez and M.~E. Hernandes, \emph{Computational methods in the local theory
  of curves}, Publica\c{c}\~oes Matem\'aticas do IMPA. [IMPA Mathematical
  Publications], Instituto de Matem\'atica Pura e Aplicada (IMPA), Rio de
  Janeiro, 2001, 23${^{{}}{\rm{o}}}$ Col\'oquio Brasileiro de Matem\'atica.
  [23rd Brazilian Mathematics Colloquium]. \MR{1849596}

\bibitem{HHR}
M.~E. Hernandes, M.~E. Rodrigues~Hernandes, and M.~A.~S. Ruas, \emph{{$\mathscr
  A_e$}-codimension of germs of analytic curves}, Manuscripta Math.
  \textbf{124} (2007), no.~2, 237--246. \MR{2341787}

\bibitem{M2}
D.~Mond, \emph{Looking at bent wires---{${\mathscr A}_e$}-codimension and the
  vanishing topology of parametrized curve singularities}, Math. Proc.
  Cambridge Philos. Soc. \textbf{117} (1995), no.~2, 213--222.

\bibitem{MM}
D.~Mond and J.~Montaldi, \emph{Deformations of maps on complete intersections,
  {D}amon's {${\mathscr K}_V$}-equivalence and bifurcations}, Singularities
  ({L}ille, 1991), London Math. Soc. Lecture Note Ser., vol. 201, Cambridge
  Univ. Press, Cambridge, 1994, pp.~263--284.

\bibitem{NOT}
J.~J. Nu\~no Ballesteros, B.~Or\'efice-Okamoto, and J.~N. Tomazella,
  \emph{Non-negative deformations of weighted homogeneous singularities},
  Glasgow Mathematical Journal (2017), 1--11.

\bibitem{NT}
J.~J. Nu{\~n}o-Ballesteros and J.~N. Tomazella, \emph{Equisingularity of
  families of map germs between curves}, Math. Z. \textbf{272} (2012), no.~1-2,
  349--360.

\bibitem{wahl}
J.~M. Wahl, \emph{Derivations, automorphisms and deformations of
  quasihomogeneous singularities}, Singularities, {P}art 2 ({A}rcata, {C}alif.,
  1981), Proc. Sympos. Pure Math., vol.~40, Amer. Math. Soc., Providence, RI,
  1983, pp.~613--624. \MR{713285}

\bibitem{wall}
C.~T.~C. Wall, \emph{Finite determinacy of smooth map-germs}, Bull. London
  Math. Soc. \textbf{13} (1981), no.~6, 481--539.

\end{thebibliography}

\end{document}